\newtheorem{theorem}{Theorem}[section]
\newtheorem{remark}{Remark}[section]
\newtheorem{definition}{Definition}[section]
\newtheorem{corollary}{Corollary}[section]
\newtheorem{example}{Example}[section]
\newtheorem{proposition}{Proposition}[section]
\numberwithin{equation}{section}
\begin{document}
	
\title{Radical convex functions}
\author{Mohammad Sababheh and Hamid Reza Moradi}
\subjclass[2010]{Primary 26A51; Secondary 26D15, 39B62.}
\keywords{Convex function, Jensen’s inequality, Hermite-Hadamard inequality, Hardy inequality.}
\maketitle

\begin{abstract}
In this article, we further explore convex functions by revealing new bounds, resulting from stronger convexity behavior. In particular, we define the so called radical convex functions and study their properties. We will see that such convex functions are bounded above by new curves, rather than straight lines. Applications including discrete and continuous Jensen inequalities, subadditivity behavior, Hermite-Hadamard and integral inequalities will be presented.
\end{abstract}
%------------------------------------------------------------------------------------%
\pagestyle{myheadings}
\markboth{\centerline {}}
{\centerline {}}
\bigskip
\bigskip
%------------------------------------------------------------------------------------%
%------------------------------------------------------------------------------------%

\section{Introduction}
Convex functions and their properties have been in the core of studying Mathematical inequalities. This includes inequalities among real numbers, functional inequalities, probability inequalities and matrix inequalities, to mention a few.

In this article, we will be interested in convex functions $f:[0,\infty)\to[0,\infty)$.
Recall that a convex function is a function that lies under its secants over the interval of convexity. This is equivalent to saying
\begin{equation}\label{eq_conv_def}
f((1-t)a+tb)\leq (1-t)f(a)+tf(b),\;\forall a,b\in [0,\infty), 0\leq t\leq 1.
\end{equation}
This inequality can be extended to $n-$parameters via the so called Jensen's inequality stating that for the positive weights $w_i$ with $\sum_{i=1}^{n}w_i=1,$ one has the inequality
\begin{equation}\label{eq_jensen_itro}
f\left(\sum_{i=1}^{n}w_ix_i\right)\leq \sum_{i=1}^{n}w_if(x_i),\;\forall x_i\in [0,\infty).
\end{equation}
A concave function is a function $f$ such that $-f$ is convex. So, for a concave function \eqref{eq_jensen_itro} is reversed.\\
The inequality \eqref{eq_jensen_itro} has a continuous version stating that
\begin{equation}\label{eq_jensen_cont_intro}
f\left(\frac{1}{b-a}\int_{a}^{b}g(x)dx\right)\leq \frac{1}{b-a}\int_{a}^{b}(f(g(x))dx,
\end{equation}
for the continuous function $g:[a,b]\to [0,\infty)$ and the convex function $f:[0,\infty)\to\mathbb{R}.$\\
An important  inequality is the well known Hermite-Hadamard inequality stating that \cite{mitri,nic}
\begin{equation}\label{eq_HH_intro}
f\left(\frac{a+b}{2}\right)\leq \frac{1}{b-a}\int_{a}^{b}f(x)dx\leq \frac{f(a)+f(b)}{2},
\end{equation}
valid for the convex function $f:[a,b]\to\mathbb{R}$. This inequality refines \eqref{eq_conv_def} when $t=\frac{1}{2}.$

Among the most important properties of concave/convex functions is their sub or super additive behavior. That is, a concave function with $f(0)=0$ satisfies the subadditive inequality \cite[Problem II.5.12]{bhatia}
\begin{equation}\label{eq_conc_sub}
f(a+b)\leq f(a)+f(b),\;a,b\geq 0,
\end{equation}
while a convex function $f$ with $f(0)=0$ satisfies \eqref{eq_conc_sub} with the inequality reversed; as a super additive behavior of convex functions.

In this article, we will treat convex functions looking into their other properties. This approach will allow obtaining new bounds, and nonlinear terms that are related to convex functions. To simplify our statements, we introduce the following simple definition.
\begin{definition}\label{def}
Let $f:[0,\infty)\to\mathbb{R}$ be a continuous  function with $f(0)=0,$ and let $p\geq 1$ be a fixed number. If  the function $g(x)=f\left(x^{\frac{1}{p}}\right)$ is convex on $[0,\infty)$, we say that $f$ is $p-$radical convex.
\end{definition}
\begin{remark}
Although Definition \ref{def} is stated for functions defined on $[0,\infty)$, it can be stated for any interval $[0,\alpha)$ where $\alpha^{\frac{1}{p}}\leq \alpha;$ to guarantee the well definiteness of the quantity $f\left(x^{\frac{1}{p}}\right)$. Since $p\geq 1$, then $\alpha\geq 1$ can be selected arbitrarily. Also, we remark that the assumption $f(0)=0$ is essential, as we will need the super additivity behavior of convex functions, which needs this assumption.
\end{remark}

Before proceeding, we list some basic properties of $p-$radical convex functions.
\begin{proposition}
Let $f$ be $p-$radical convex, for some $p\geq 1.$ 
\begin{enumerate}
\item $f$ is increasing and convex.
\item $f$ is $q-$radical convex, for all $1\leq q\leq p.$
\item If $g$ is $q-$radical convex for some $q\geq 1,$ then $f+g$ is $\min\{p,q\}-$radical convex.
\item If $g$ is increasing convex, then the composite function $g\circ f$ is $p-$radical convex.
\end{enumerate}
\end{proposition}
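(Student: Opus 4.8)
The plan is to verify each of the four claims by unwinding the definition of $p$-radical convexity, namely that $g(x)=f\!\left(x^{1/p}\right)$ is convex on $[0,\infty)$, together with the standing assumption $f(0)=0$.

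For part (1), I would first establish that $f$ is increasing. The key observation is that $f(t)=g(t^{p})$, and since $g$ is convex with $g(0)=f(0)=0$, it is superadditive, so $g$ is nonnegative and in fact nondecreasing on $[0,\infty)$ once we know $f$ maps into $[0,\infty)$; composing the increasing map $t\mapsto t^{p}$ with the increasing $g$ then gives that $f$ is increasing. To see convexity of $f$ itself, I would write $f=g\circ\varphi$ where $\varphi(t)=t^{p}$ is convex and increasing for $p\ge 1$, and use the standard fact that the composition of a convex increasing function $g$ with a convex function $\varphi$ is convex. The main subtlety here is justifying that $g$ is \emph{increasing}: convexity of $g$ plus $g(0)=0$ forces $g$ to be nondecreasing provided $g\ge 0$, which follows because $f$ is assumed to take values in $[0,\infty)$ in this setting.

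For part (2), the plan is to show that if $g(x)=f\!\left(x^{1/p}\right)$ is convex, then $h(x)=f\!\left(x^{1/q}\right)$ is convex for $1\le q\le p$. I would write $h(x)=g\!\left(x^{p/q}\right)$ and note that, since $p/q\ge 1$, the inner map $x\mapsto x^{p/q}$ is convex and increasing; composing it with the convex increasing $g$ (increasing by part (1)) yields convexity of $h$. For part (3), I would let $p'=\min\{p,q\}$; by part (2) both $f$ and the other function are $p'$-radical convex, so both $f\!\left(x^{1/p'}\right)$ and $g\!\left(x^{1/p'}\right)$ are convex, and their sum is convex, giving that $f+g$ is $p'$-radical convex. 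For part (4), with $g$ increasing convex, I would observe that $(g\circ f)\!\left(x^{1/p}\right)=g\!\left(f\!\left(x^{1/p}\right)\right)$ is the composition of the convex increasing $g$ with the convex function $f\!\left(x^{1/p}\right)$, hence convex; one also checks the normalization $(g\circ f)(0)=g(0)$, which is handled by the convention that $g(0)=0$ is inherited or can be arranged.

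The step I expect to be the main obstacle is the monotonicity claim underlying part (1), since every subsequent composition argument in parts (2) and (4) relies on $g$ (equivalently $f$) being increasing so that ``convex $\circ$ convex-increasing is convex'' applies. I would therefore isolate this as a short lemma: a convex function on $[0,\infty)$ that is nonnegative and vanishes at $0$ is nondecreasing. Once that is in hand, the remaining parts are routine applications of the composition rule for convex functions, and the only care needed is tracking the exponents $1/p$, $1/q$, and $p/q$ and confirming each is either $\ge 1$ or $\le 1$ as required so that the relevant power map has the correct convexity and monotonicity.
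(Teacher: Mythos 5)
Your proposal is correct and takes essentially the same route as the paper: you reduce everything to the monotonicity lemma that a convex, nonnegative function vanishing at $0$ is nondecreasing (the paper gets this from the trichotomy of convex functions on $[0,\infty)$), and then apply the convex-increasing composition rule, writing $h(x)=g\left(x^{p/q}\right)$ for part (2) exactly as the paper does, with (3) and (4) handled by the routine arguments the paper omits as straightforward. Two cosmetic remarks: the superadditivity aside in part (1) does no work (nonnegativity of $g$ comes from the codomain assumption, as you yourself note), and your observation that part (4) requires the normalization $g(0)=0$ is a legitimate point that the paper's statement leaves implicit.
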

\begin{proof}
We prove the first and second assertions only. The others are straightforward.
\begin{enumerate}
\item Let $g(x)=f\left(x^{\frac{1}{p}}\right)$. Then $g$ is convex, and $g(0)=0.$ A convex function on $[0,\infty)$ is either decreasing on $[0,\infty)$, increasing on $[0,\infty)$ or decreasing on $[0,\alpha]$ and increasing on $[\alpha,\infty)$ for some $\alpha>0.$ Since $g(0)=0$ and $g\geq 0,$ it follows that $g$ is increasing on $[0,\infty).$ Consequently, $f(x)=g(x^p)$ is the composition of two increasing functions, hence $f$ is increasing. Further, since $g$ is convex increasing and $x\mapsto x^p$ is convex, it follows that $f$ is convex.

\item Since $f$ is $p-$radical convex, the function $g(x)=f\left(x^{\frac{1}{p}}\right)$ is convex. Define $h(x)=f\left(x^{\frac{1}{q}}\right)$, for $1\leq q\leq p.$ It is clear that $h(x)=g\left(x^{\frac{p}{q}}\right)$. By the first statement of the proposition, $f$ is convex increasing, and hence $g$ is increasing. Since the mapping $x\mapsto x^{\frac{p}{q}}$ is convex and $g$ is increasing convex, it follows that $h$ convex.
\end{enumerate}
\end{proof}

We also notice that $p-$radical convex functions can be constructed in different ways. For example, if $f$ is convex with $f(0)=0,$ then the function $g$ defined by $g(x)=f\left(x^p\right)$ is $p-$radical convex. Another observation is that if $f$ has the Maclaurin series
$$f(x)=\sum_{n=0}^{\infty}\alpha_nx^n,\;\alpha_n>0,$$ then the function
$$g(x)=f(x)-\sum_{n=0}^{p^*}\alpha_nx^n,$$ can be easily seen to be $p-$radical convex, where $p^*$ is the greatest integer less than $p$.

\begin{example}
\begin{enumerate}
\item $f(x)=e^x$ has the Maclaurin series $$e^x=\sum_{n=0}^{\infty}\frac{1}{n!}x^n.$$ Since $\frac{1}{n!}>0,$ it follows that
$$e^x-\sum_{n=1}^{p^*}\frac{1}{n!}x^n$$ is $p-$radical convex. For example, $f_1(x)=e^x-1-x$ is $2-$radical convex, $f_2(x)=e^x-1-x-\frac{x^2}{2}$ is $3-$radical convex, and so on.

\item $f(x)=\frac{1}{1-x}$ has the Maclaurin series
$$\frac{1}{1-x}=\sum_{n=0}^{\infty}x^n, 0\leq x<1.$$ We may construct $p-$radical convex functions from $f$ as follows. $f_1(x)=\frac{1}{1-x}-1-x$ is $2-$radical convex, $f_2(x)=\frac{1}{1-x}-1-x-x^2$ is $3-$radical convex, and so on.

\item A similar argument applies to the function $$f(x)=-\ln(1-x)=\sum_{n=0}^{\infty}\frac{1}{n+1}x^{n+1}, 0<x<1.$$
\end{enumerate}
\end{example}

As  easy examples of $p-$radical convex functions, we notice that $f(x)=x^2$ is 2-radical convex but not 3-radical convex, while the function $f(x)=x^4$ is $p-$radical convex for all $1\leq p\leq 4.$ So, although both functions $f_1(x)=x^2$ and $f_2(x)=x^4$ are convex, it seems that the two functions do not behave similarly, in terms of convexity. This understanding will lead to interesting forms of \eqref{eq_conv_def}, \eqref{eq_jensen_itro} and \eqref{eq_conc_sub}. 

For example, we will show that a 2-radical convex function $f$ satisfies the interesting inequality
\begin{equation}\label{eq_2-conv_intro}
f\left( \left( 1-t \right)a+tb \right)+f\left( \sqrt {t(1-t)} \left| a-b \right| \right)\le \left( 1-t \right)f\left( a \right)+tf\left( b \right), a,b\in [0,\infty), 0\leq t\leq 1.
\end{equation}
Since $f\geq 0,$ by definition, the inequality \eqref{eq_2-conv_intro} provides a new refining term for \eqref{eq_conv_def}. Although \eqref{eq_conv_def} has been refined in the literature, the new refinement in \eqref{eq_2-conv_intro} presents a non-linear smooth refining term, namely  $f\left( \sqrt {t(1-t)} \left| a-b \right| \right)$. We refer the reader to \cite{mitroi,sab_mjom,sab_mia} for refinements that include linear or piecewise linear refining terms. Also, we refer the reader to a non-linear refinement of convex functions in \cite{sab_bull}. Then, new forms of the subadditive inequality \eqref{eq_conc_sub} will be presented for such functions. Further applications include new forms of the Hermite-Hadamard inequality and new unexpected bounds for 2-convex functions.

After establishing our results for $2-$radical convex functions, we go over $p-$radical convex functions more generally. We will show multiple terms refining \eqref{eq_conv_def} for $p-$radical convex functions and a new form of the Hermite-Hadamrd inequality.   A nice application of $p-$radical convex functions will be its relation with the celebrated Hardy inequality stating
\begin{equation}\label{eq_hardy_intro}
\int_{0}^{\infty}\left(\frac{1}{x}\int_{0}^{x}f(t)dt\right)^pdx\leq \left(\frac{p}{p-1}\right)^{p}\int_{0}^{\infty}f(x)^pdx,
\end{equation}
valid for the measurable function $f:(0,\infty)\to (0,\infty)$ and $p>1$. When $f$ is $p-$radical convex, it can be easily seen that $\int_{0}^{\infty}f(x)^pdx=\infty.$ However, we will be able to prove a new version of \eqref{eq_hardy_intro}, where the interval $(0,\infty)$ is replaced by any other finite-length interval. This can be seen in Theorem \ref{thm_hardy} below.

\section{2-radical Convex functions}
In this section, we study detailed properties of 2-radical convex functions. First, we present a refinement of Jensen's inequality for $2-$radical convex functions.
\begin{theorem}\label{10}
 Let $f$ be 2-radical convex. If ${{x}_{1}},\ldots ,{{x}_{n}}\ge 0$ and $0\le {{w}_{1}},\ldots ,{{w}_{n}}\le 1$ are such that $\sum\nolimits_{i=1}^{n}{{{w}_{i}}}=1$, then
\[f\left( \sum\limits_{i=1}^{n}{{{w}_{i}}{{x}_{i}}} \right)\le \sum\limits_{i=1}^{n}{{w}_{i}}\left\{f\left( \frac{\sum\nolimits_{j=1}^{n}{{{w}_{j}}{{x}_{j}}}+{{x}_{i}}}{2} \right)+f\left( \frac{\left| \sum\nolimits_{j=1}^{n}{{{w}_{j}}{{x}_{j}}}-{{x}_{i}} \right|}{2} \right)\right\}\le \sum\limits_{i=1}^{n}{{{w}_{i}}f\left( {{x}_{i}} \right)}.\]
\end{theorem}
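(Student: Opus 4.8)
The plan is to reduce the whole statement to a single two-point inequality (the midpoint case of \eqref{eq_2-conv_intro}) and then feed that into two applications of the ordinary Jensen inequality. Throughout, write $g(x)=f\left(x^{\frac12}\right)$, which by hypothesis is convex on $[0,\infty)$ with $g(0)=0$, and abbreviate the weighted mean by $m=\sum_{j=1}^{n}w_jx_j$. By the first assertion of the Proposition above, $f$ itself is convex and increasing, so \eqref{eq_jensen_itro} is available for $f$.

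The technical heart of the argument is to establish, for all $a,b\ge 0$, the inequality
\begin{equation*}
f\!\left(\frac{a+b}{2}\right)+f\!\left(\frac{|a-b|}{2}\right)\le\frac{f(a)+f(b)}{2}.\tag{$\star$}
\end{equation*}
I would prove $(\star)$ by combining three facts about $g$. First, the elementary identity $\left(\frac{a+b}{2}\right)^2+\left(\frac{a-b}{2}\right)^2=\frac{a^2+b^2}{2}$. Second, since $g$ is convex with $g(0)=0$ it is superadditive (this is \eqref{eq_conc_sub} with the inequality reversed), so evaluating $g$ at the sum above gives $g\!\left(\frac{a^2+b^2}{2}\right)\ge g\!\left(\left(\frac{a+b}{2}\right)^2\right)+g\!\left(\left(\frac{a-b}{2}\right)^2\right)=f\!\left(\frac{a+b}{2}\right)+f\!\left(\frac{|a-b|}{2}\right)$. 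Third, midpoint convexity of $g$ at the points $a^2$ and $b^2$ yields $g\!\left(\frac{a^2+b^2}{2}\right)\le\frac{g(a^2)+g(b^2)}{2}=\frac{f(a)+f(b)}{2}$. Chaining the last two displays produces $(\star)$.

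For the right-hand inequality of the theorem, I would apply $(\star)$ with $a=m$ and $b=x_i$, multiply by $w_i$, and sum over $i$; since $\sum_i w_i=1$ this gives $\sum_i w_i\{\cdots\}\le\frac12 f(m)+\frac12\sum_i w_i f(x_i)$. Invoking ordinary Jensen, $f(m)\le\sum_i w_i f(x_i)$, absorbs the leftover $\frac12 f(m)$ and delivers the clean upper bound $\sum_i w_i f(x_i)$. For the left-hand inequality, note that the points $\frac{m+x_i}{2}$ have weighted average $\sum_i w_i\frac{m+x_i}{2}=\frac{m}{2}+\frac{m}{2}=m$; hence Jensen applied to these points gives $f(m)\le\sum_i w_i f\!\left(\frac{m+x_i}{2}\right)$, and adding the nonnegative terms $w_i f\!\left(\frac{|m-x_i|}{2}\right)$ (recall $f\ge 0$) only increases the right-hand side, so $f(m)\le\sum_i w_i\{\cdots\}$.

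The only genuinely delicate point is $(\star)$: one must invoke superadditivity in the correct direction (it is the \emph{reversed} form of \eqref{eq_conc_sub}, valid precisely because $g$ is convex with $g(0)=0$) and must keep track of the radical, so that $g\!\left(\left(\frac{a-b}{2}\right)^2\right)=f\!\left(\frac{|a-b|}{2}\right)$ rather than $f\!\left(\frac{a-b}{2}\right)$. Everything else is bookkeeping with the normalization $\sum_i w_i=1$ together with two applications of the ordinary Jensen inequality \eqref{eq_jensen_itro} for the convex function $f$.
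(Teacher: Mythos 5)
Your proof is correct and takes essentially the same route as the paper: the same algebraic identity combined with superadditivity and midpoint convexity of $g(x)=f\left(\sqrt{x}\right)$ to obtain the two-point inequality $(\star)$, followed by the same weighting-and-summing step with two applications of Jensen's inequality for the convex function $f$. The only cosmetic difference is that the paper first derives the general $t$-version $f\left((1-t)a+tb\right)+f\left(\sqrt{t(1-t)}\,|a-b|\right)\le(1-t)f(a)+tf(b)$ and then specializes to $t=\tfrac{1}{2}$, whereas you go directly to the midpoint case, which is all this theorem requires.
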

\begin{proof}
Assume that $0\le t\le 1$. We have
\begin{equation}
\begin{aligned}\label{1}
  & \left( 1-t \right){{a}^{2}}+t{{b}^{2}}-{{\left( \left( 1-t \right)a+tb \right)}^{2}}-{{\left( 1-t \right)}^{2}}{{\left( a-b \right)}^{2}} \\ 
 & =\left( 1-t \right)\left( 2t-1 \right){{a}^{2}}+\left( 1-t \right)\left( 2t-1 \right){{b}^{2}}-2\left( 1-t \right)\left( 2t-1 \right)ab \\ 
 & =\left( 1-t \right)\left( 2t-1 \right)\left( {{a}^{2}}+{{b}^{2}}-2ab \right) \\ 
 & =\left( 1-t \right)\left( 2t-1 \right){{\left( a-b \right)}^{2}}.
\end{aligned}
\end{equation}
That is,
\[\left( 1-t \right){{a}^{2}}+t{{b}^{2}}-{{\left( \left( 1-t \right)a+tb \right)}^{2}}-{{\left( 1-t \right)}^{2}}{{\left( a-b \right)}^{2}}=\left( 1-t \right)\left( 2t-1 \right){{\left( a-b \right)}^{2}}.\]
Thus,
\[\left( 1-t \right){{a}^{2}}+t{{b}^{2}}=t\left( 1-t \right){{\left( a-b \right)}^{2}}+{{\left( \left( 1-t \right)a+tb \right)}^{2}}.\]
Let $g\left( t \right)=f\left( \sqrt{t} \right)$, $t\in \left[ 0,\infty  \right)$. Then $g$ is an increasing convex function on $\left[ 0,\infty  \right)$. This implies,
\[\begin{aligned}
   g\left( {{\left( \left( 1-t \right)a+tb \right)}^{2}} \right)+g\left( {{t(1-t)}}{{\left( a-b \right)}^{2}} \right)&\le g\left( {{\left( \left( 1-t \right)a+tb \right)}^{2}}+{{t(1-t) }}{{\left( a-b \right)}^{2}} \right) \\ 
 & = g\left( \left( 1-t \right){{a}^{2}}+t{{b}^{2}} \right) \\ 
 & \le \left( 1-t \right)g\left( {{a}^{2}} \right)+tg\left( {{b}^{2}} \right).  
\end{aligned}\]
Consequently,
\[g\left( {{\left( \left( 1-t \right)a+tb \right)}^{2}} \right)+g\left( {{t(1-t) }}{{\left( a-b \right)}^{2}} \right)\le \left( 1-t \right)g\left( {{a}^{2}} \right)+tg\left( {{b}^{2}} \right).\]
Thus,
\begin{equation}\label{eq_2-conv_pf}
f\left( \left( 1-t \right)a+tb \right)+f\left( \sqrt {t(1-t)} \left| a-b \right| \right)\le \left( 1-t \right)f\left( a \right)+tf\left( b \right),
\end{equation}
where $0\le t\le 1$.  In particular,
\begin{equation}\label{8}
f\left( \frac{a+b}{2} \right)+f\left( \frac{\left| a-b \right|}{2} \right)\le \frac{f\left( a \right)+f\left( b \right)}{2}.
\end{equation}
Replacing $a$ and $b$ by $\sum\nolimits_{i=1}^{n}{{{w}_{i}}{{x}_{i}}}$ and $x_i$, respectively, in \eqref{8}, we get
\begin{equation}\label{7}
f\left( \frac{\sum\nolimits_{j=1}^{n}{{{w}_{j}}{{x}_{j}}}+{{x}_{i}}}{2} \right)+f\left( \frac{\left| \sum\nolimits_{j=1}^{n}{{{w}_{j}}{{x}_{j}}}-{{x}_{i}} \right|}{2} \right)\le \frac{f\left( \sum\nolimits_{i=1}^{n}{{{w}_{i}}{{x}_{i}}} \right)+f\left( {{x}_{i}} \right)}{2}.
\end{equation}
Multiplying \eqref{7} by ${{w}_{i}}\ge 0$ $\left( i=1,\ldots ,n \right)$ and summing over $i$ from $1$ to $n$ we may deduce
\[\begin{aligned}
  & f\left( \sum\limits_{i=1}^{n}{{{w}_{i}}{{x}_{i}}} \right) \\ 
 & \le \sum\limits_{i=1}^{n}{{{w}_{i}}f\left( \frac{\sum\nolimits_{j=1}^{n}{{{w}_{j}}{{x}_{j}}}+{{x}_{i}}}{2} \right)} \\ 
 & \le \sum\limits_{i=1}^{n}{{{w}_{i}}f\left( \frac{\sum\nolimits_{j=1}^{n}{{{w}_{j}}{{x}_{j}}}+{{x}_{i}}}{2} \right)}+\sum\limits_{i=1}^{n}{{{w}_{i}}f\left( \frac{\left| \sum\nolimits_{j=1}^{n}{{{w}_{j}}{{x}_{j}}}-{{x}_{i}} \right|}{2} \right)} \\ 
 & \le \frac{f\left( \sum\nolimits_{i=1}^{n}{{{w}_{i}}{{x}_{i}}} \right)+\sum\nolimits_{i=1}^{n}{{{w}_{i}}f\left( {{x}_{i}} \right)}}{2} \\ 
 & \le \sum\limits_{i=1}^{n}{{{w}_{i}}f\left( {{x}_{i}} \right)},
\end{aligned}\]
which gives the desired inequality.
\end{proof}
The inequality \eqref{eq_2-conv_pf} is of special interest that it deserves to be mentioned explicitly.
\begin{corollary}\label{cor_2-conv_ref}
Let $f$ be 2-radical convex and let $a,b\geq 0.$ If $0\leq t\leq 1,$ then 
$$f\left( \left( 1-t \right)a+tb \right)+f\left( \sqrt {t(1-t)} \left| a-b \right| \right)\le \left( 1-t \right)f\left( a \right)+tf\left( b \right).$$
\end{corollary}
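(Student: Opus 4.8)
The statement is precisely inequality \eqref{eq_2-conv_pf}, which already surfaced inside the proof of Theorem \ref{10}; the plan is therefore to reconstruct that intermediate step as a self-contained argument. First I would pass to the auxiliary function $g(t)=f(\sqrt{t})$ on $[0,\infty)$. By Definition \ref{def} with $p=2$ this $g$ is convex, and since $f$ is convex increasing (by the preceding proposition) so is $g$; moreover $g(0)=f(0)=0$. The entire argument will be carried out at the level of $g$ and then transported back to $f$ through the relation $g(s^2)=f(s)$, valid for $s\ge 0$.

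The engine of the proof is the elementary decomposition
\[
(1-t)a^2+tb^2=\bigl((1-t)a+tb\bigr)^2+t(1-t)(a-b)^2,
\]
a variance-type identity verified by expanding both sides; this is exactly the computation recorded in \eqref{1}. Writing $X=\bigl((1-t)a+tb\bigr)^2$ and $Y=t(1-t)(a-b)^2$, both nonnegative, the identity reads $X+Y=(1-t)a^2+tb^2$.

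Next I would invoke two features of $g$ in turn. Because $g$ is convex with $g(0)=0$ it is superadditive, so $g(X)+g(Y)\le g(X+Y)$; and because $g$ is convex, $g(X+Y)=g\bigl((1-t)a^2+tb^2\bigr)\le (1-t)g(a^2)+tg(b^2)$. Chaining these two estimates and rewriting every term via $g(s^2)=f(s)$ yields
\[
f\bigl((1-t)a+tb\bigr)+f\bigl(\sqrt{t(1-t)}\,|a-b|\bigr)\le (1-t)f(a)+tf(b),
\]
which is the claimed inequality.

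The only genuinely delicate point is the superadditivity step, and it is exactly there that the hypothesis $f(0)=0$ is indispensable: a convex function vanishing at the origin satisfies $g(X)+g(Y)\le g(X+Y)$, which is the reversed form of \eqref{eq_conc_sub}, and it is this reversal that converts the two pieces $X$ and $Y$ of the decomposition into the two separated terms on the left-hand side. Everything else—the algebraic identity, the convexity estimate, and the substitution back to $f$—is routine, so I expect no further obstacle once the superadditive inequality is in hand.
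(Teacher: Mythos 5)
Your proposal is correct and reproduces the paper's own argument essentially verbatim: the variance-type identity $(1-t)a^2+tb^2=\bigl((1-t)a+tb\bigr)^2+t(1-t)(a-b)^2$ from \eqref{1}, followed by superadditivity of the convex function $g(t)=f(\sqrt{t})$ with $g(0)=0$ and then ordinary convexity of $g$, is exactly how the paper derives \eqref{eq_2-conv_pf} inside the proof of Theorem \ref{10}, of which the corollary is a restatement. The only cosmetic difference is that you invoke monotonicity of $g$, which neither you nor the paper actually needs at this step, since superadditivity and convexity alone carry the argument.
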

In particular, the following inequality holds for $2-$radical convex functions.
\begin{corollary}
Let $f$ be $2-$radical convex. Then, for $0\leq t\leq 1,$
$$f(t)\leq f(1) t-f\left(\sqrt{t(1-t)}\right)\leq f(1)t.$$
\end{corollary}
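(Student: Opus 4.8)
The plan is to derive both inequalities by specializing the refined Jensen inequality of Corollary \ref{cor_2-conv_ref} to the endpoints $a=0$ and $b=1$. With this choice the convex combination becomes $(1-t)\cdot 0 + t\cdot 1 = t$, the correction argument becomes $\sqrt{t(1-t)}\,|0-1| = \sqrt{t(1-t)}$, and the right-hand side collapses to $(1-t)f(0)+tf(1)=tf(1)$, since the standing hypothesis $f(0)=0$ annihilates the first summand. Feeding these substitutions into Corollary \ref{cor_2-conv_ref} yields
\[
f(t)+f\left(\sqrt{t(1-t)}\right)\le f(1)\,t,
\]
and transposing the middle term produces exactly the left inequality $f(t)\le f(1)t-f\left(\sqrt{t(1-t)}\right)$.

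For the right inequality I would invoke the nonnegativity of $f$ on $[0,\infty)$. By part (1) of the preceding Proposition, every $2$-radical convex function is increasing on $[0,\infty)$, and combined with $f(0)=0$ this forces $f(x)\ge 0$ for all $x\ge 0$. In particular $f\left(\sqrt{t(1-t)}\right)\ge 0$ whenever $0\le t\le 1$, whence
\[
f(1)t-f\left(\sqrt{t(1-t)}\right)\le f(1)t,
\]
which closes the chain.

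I expect no genuine obstacle here, as the statement is essentially an immediate corollary of Corollary \ref{cor_2-conv_ref}; the only points demanding care are (i) confirming that the argument $\sqrt{t(1-t)}$ lies in the domain $[0,\infty)$ on which $f$ is defined, which is clear because $t(1-t)\ge 0$ on $[0,1]$, and (ii) making sure the nonnegativity of $f$ is genuinely available rather than assumed afresh, which I would trace back to the monotonicity of $f$ recorded in the Proposition. One may also note that the bounds are sharp at $t\in\{0,1\}$, where the correction term $f\left(\sqrt{t(1-t)}\right)$ vanishes and the three quantities reduce to $f(0)=0$ and $f(1)$, respectively.
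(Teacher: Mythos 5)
Your proof is correct and follows exactly the route the paper intends: the corollary is stated as an immediate specialization of Corollary \ref{cor_2-conv_ref} with $a=0$, $b=1$, using $f(0)=0$ for the left inequality and $f\ge 0$ for the right. Your extra care in deriving nonnegativity of $f$ from monotonicity and $f(0)=0$ is a harmless (indeed slightly more rigorous) elaboration of what the paper takes for granted.
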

This provides a non linear term bounding the $2-$radical convex function from above, providing a better bound than the linear one.

Further, we have the following application related to the arithmetic-geometric mean inequality.
\begin{corollary}
Let $f(x)=x^p, p\geq 2$ and let $x_i>0$ and $w_i>0$ with $\sum_{i=1}^{n}w_i=1.$  Then 
\[\begin{aligned}
   \prod\limits_{i=1}^{n}{x_{i}^{{{w}_{i}}}}&\le \sum\limits_{i=1}^{n}{{{w}_{i}}\left\{ f\left( \frac{\sum\nolimits_{j=1}^{n}{{{w}_{j}}{{f}^{-1}}\left( {{x}_{j}} \right)}+{{f}^{-1}}\left( {{x}_{i}} \right)}{2} \right)+f\left( \frac{\left| \sum\nolimits_{j=1}^{n}{{{w}_{j}}{{f}^{-1}}\left( {{x}_{j}} \right)}-{{f}^{-1}}\left( {{x}_{i}} \right) \right|}{2} \right) \right\}} \\ 
 & \le \sum\limits_{i=1}^{n}{{{w}_{i}}{{x}_{i}}}.  
\end{aligned}\]
\end{corollary}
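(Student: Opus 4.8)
The plan is to apply Theorem \ref{10} after a change of variables, and then bridge to the geometric mean with the classical weighted arithmetic--geometric mean inequality. First I would record that $f(x)=x^p$ with $p\ge 2$ is indeed $2$-radical convex: setting $g(x)=f\!\left(x^{1/2}\right)=x^{p/2}$, the function $g$ is convex on $[0,\infty)$ precisely because $p/2\ge 1$. Hence Theorem \ref{10} is available, and $f$ is invertible on $[0,\infty)$ with $f^{-1}(x)=x^{1/p}$.

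The key step is to apply the conclusion of Theorem \ref{10} not to the data $x_i$ themselves, but to the transformed points $y_i:=f^{-1}(x_i)=x_i^{1/p}>0$ (which in particular are nonnegative, so the hypotheses of the theorem are met). With this substitution the right-hand end of the chain in Theorem \ref{10} becomes $\sum_{i=1}^n w_i f(y_i)=\sum_{i=1}^n w_i f\!\left(f^{-1}(x_i)\right)=\sum_{i=1}^n w_i x_i$, while the middle expression becomes exactly the quantity displayed in the corollary (with $f^{-1}(x_j)$ in place of $x_j$). This already yields the second inequality of the corollary, together with the auxiliary bound
\[
f\!\left(\sum_{i=1}^n w_i f^{-1}(x_i)\right)\le \sum_{i=1}^n w_i\left\{ f\!\left(\frac{\sum_{j=1}^n w_j f^{-1}(x_j)+f^{-1}(x_i)}{2}\right)+f\!\left(\frac{\left|\sum_{j=1}^n w_j f^{-1}(x_j)-f^{-1}(x_i)\right|}{2}\right)\right\}.
\]

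It therefore remains to establish the first inequality, and for this it suffices to show $\prod_{i=1}^n x_i^{w_i}\le f\!\left(\sum_{i=1}^n w_i f^{-1}(x_i)\right)$, since the right-hand side here is precisely the left end of the chain above. I would obtain this from weighted AM--GM applied to the positive numbers $x_i^{1/p}$: namely $\prod_{i=1}^n\left(x_i^{1/p}\right)^{w_i}\le \sum_{i=1}^n w_i x_i^{1/p}$, that is $\left(\prod_{i=1}^n x_i^{w_i}\right)^{1/p}\le \sum_{i=1}^n w_i x_i^{1/p}$. Raising both sides to the power $p$ and using that $t\mapsto t^p$ is increasing on $[0,\infty)$ gives $\prod_{i=1}^n x_i^{w_i}\le\left(\sum_{i=1}^n w_i x_i^{1/p}\right)^p=f\!\left(\sum_{i=1}^n w_i f^{-1}(x_i)\right)$, as required. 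Chaining the three estimates completes the proof.

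I do not expect a genuine obstacle here: the entire content is the substitution $x_i\mapsto f^{-1}(x_i)$ in Theorem \ref{10} combined with a single application of AM--GM. The only points requiring care are the verification that $x^p$ is $2$-radical convex for $p\ge 2$ (so that the theorem applies), and the bookkeeping that the left end $f\!\left(\sum_i w_i f^{-1}(x_i)\right)$ of the resulting chain coincides with the $p$-th power bound produced by AM--GM.
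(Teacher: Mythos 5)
Your proposal is correct and takes essentially the same route as the paper: the paper likewise notes that $x^p$ ($p\ge 2$) is $2$-radical convex, combines weighted AM--GM with the chain of Theorem \ref{10}, and then replaces $x_i$ by $f^{-1}(x_i)$. The only difference is cosmetic ordering --- you substitute first and apply AM--GM to the $x_i^{1/p}$ afterward, which has the small merit of making explicit the identification $f\bigl(\prod_{i}(f^{-1}(x_i))^{w_i}\bigr)=\prod_{i}x_i^{w_i}$ that the paper's terse final step leaves implicit.
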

\begin{proof}
Notice that $f(x)=x^p, p\geq 2$ is $2-$radical convex. Since 
\[\prod\limits_{i=1}^{n}{x_{i}^{{{w}_{i}}}}\le \sum\limits_{i=1}^{n}{{{w}_{i}}{{x}_{i}}}\]
and $f$ is increasing, it follows that
\[f\left( \prod\limits_{i=1}^{n}{x_{i}^{{{w}_{i}}}} \right)\le \sum\limits_{i=1}^{n}{{{w}_{i}}\left\{ f\left( \frac{\sum\nolimits_{j=1}^{n}{{{w}_{j}}{{x}_{j}}}+{{x}_{i}}}{2} \right)+f\left( \frac{\left| \sum\nolimits_{j=1}^{n}{{{w}_{j}}{{x}_{j}}}-{{x}_{i}} \right|}{2} \right) \right\}}\le \sum\limits_{i=1}^{n}{{{w}_{i}}f\left( {{x}_{i}} \right)}.\]
Of course $f$ is an invertible function and its inverse is positive. So we may repalce ${{x}_{i}}$ by ${{f}^{-1}}\left( {{x}_{i}} \right)$ in the above inequality. This implies the desired result.
\end{proof}

Next, we show a refinement of the super additivity behavior of convex functions.
\begin{proposition}
Let $f$ be 2-radical convex and let $a,b\geq 0$. Then
\[f\left( a \right)+f\left( b \right)+f\left( \sqrt{2ab} \right)\le f\left( a+b \right).\]
\end{proposition}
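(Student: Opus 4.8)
The plan is to reduce the claim to the superadditivity of the auxiliary function attached to the radical convexity condition, rather than to invoke the refined Jensen inequality. First I would set $g(x)=f\left(\sqrt{x}\right)$ for $x\in[0,\infty)$. By the definition of $2$-radical convexity, $g$ is convex, and since $f(0)=0$ we have $g(0)=0$. A convex function vanishing at the origin is superadditive, so
\[
g(x)+g(y)\le g(x+y),\qquad x,y\ge 0,
\]
which is the reversed form of \eqref{eq_conc_sub} recorded in the introduction. This single inequality will do all the work.

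Next I would translate every term of the proposed inequality into a value of $g$. Since $f(u)=g\left(u^{2}\right)$ for $u\ge 0$, we have $f(a)=g\left(a^{2}\right)$, $f(b)=g\left(b^{2}\right)$, $f\left(\sqrt{2ab}\right)=g(2ab)$, and, crucially, $f(a+b)=g\left((a+b)^{2}\right)=g\left(a^{2}+2ab+b^{2}\right)$. The target inequality is therefore equivalent to
\[
g\left(a^{2}\right)+g\left(b^{2}\right)+g(2ab)\le g\left(a^{2}+b^{2}+2ab\right).
\]
Applying superadditivity twice, first to the pair $a^{2},b^{2}$ and then to the pair $a^{2}+b^{2},\,2ab$, yields
\[
g\left(a^{2}\right)+g\left(b^{2}\right)+g(2ab)\le g\left(a^{2}+b^{2}\right)+g(2ab)\le g\left(a^{2}+b^{2}+2ab\right),
\]
which is exactly what is needed; reverting to $f$ then finishes the proof.

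The argument has no genuine obstacle: the only thing to spot is the algebraic decomposition $(a+b)^{2}=a^{2}+b^{2}+2ab$, whose three summands match the left-hand arguments $a^{2}$, $b^{2}$ and $2ab$ precisely, so that two applications of superadditivity of $g$ close the gap. It is worth noting that this route uses only $g(0)=0$ together with convexity of $g$, and not the sharper inequality of Corollary \ref{cor_2-conv_ref}; the extra term $f\left(\sqrt{2ab}\right)$ appears naturally because the cross term $2ab$ in $(a+b)^{2}$ is itself fed through $g$ rather than discarded.
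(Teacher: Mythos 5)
Your proof is correct and is essentially identical to the paper's own argument: both set $g(x)=f\left(\sqrt{x}\right)$, use the decomposition $(a+b)^{2}=a^{2}+b^{2}+2ab$, and apply the superadditivity of the convex function $g$ with $g(0)=0$ twice. The only difference is cosmetic — the paper chains the inequalities downward from $g\left((a+b)^{2}\right)$ while you build them upward — so there is nothing further to add.
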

\begin{proof}
 Since for any $a,b\in \mathbb{R}$	
	\[{{\left( a+b \right)}^{2}}={{a}^{2}}+{{b}^{2}}+2ab.\]
We have, for $g(x)=f\left(\sqrt{x}\right),$
	\[\begin{aligned}
   g\left( {{\left( a+b \right)}^{2}} \right)&=g\left( {{a}^{2}}+{{b}^{2}}+2ab \right) \\ 
 & \ge g\left( {{a}^{2}}+{{b}^{2}} \right)+g\left( 2ab \right) \\ 
 & \ge g\left( {{a}^{2}} \right)+g\left( {{b}^{2}} \right)+g\left( 2ab \right),  
\end{aligned}\]
where we have used the fact that $g$ is super additive, being a convex function with $g(0)=0$, to obtain the last two inequalities. Noting that $g(x)=f\left(\sqrt{x}\right),$ the proof is complete.
\end{proof}

On the other hand, Hermite-Hadamard inequalities refining \eqref{eq_HH_intro} can be shown as follows.
\begin{theorem}\label{thm_HH}
Let $f$ be 2-radical convex. Then for $b>a>0,$
\begin{align*}
f\left(\frac{a+b}{2}\right)+\frac{2}{b-a}\int_{0}^{\frac{b-a}{2}}f(x)dx\leq \frac{1}{b-a}\int_{a}^{b}f(x)dx,
\end{align*}
and
\begin{align*}
\frac{1}{b-a}\int_{a}^{b}f(x)dx+\frac{1}{b-a}\int_{0}^{\frac{b-a}{2}}\frac{4xf(x)}{\sqrt{(b-a)^2-4x^2}}dx\leq\frac{f(a)+f(b)}{2}.
\end{align*}
\end{theorem}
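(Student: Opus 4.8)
The plan is to derive both inequalities directly from the two pointwise refinements already in hand, namely the midpoint refinement \eqref{8} and the weighted refinement of Corollary \ref{cor_2-conv_ref} (equation \eqref{eq_2-conv_pf}), by mimicking the two classical routes to the Hermite--Hadamard inequality and then absorbing the extra $f(\cdot)$ term through a change of variables. Since $b>a>0$ we will throughout replace $|a-b|$ by $b-a$.

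For the first inequality I would imitate the standard ``midpoint'' proof of the left-hand Hermite--Hadamard bound. Fix $x\in[a,b]$ and apply \eqref{8} to the pair $x$ and $a+b-x$, whose average is $\tfrac{a+b}{2}$; this yields
\[
f\!\left(\frac{a+b}{2}\right)+f\!\left(\frac{|2x-a-b|}{2}\right)\le \frac{f(x)+f(a+b-x)}{2}.
\]
Integrating over $x\in[a,b]$ and using the reflection symmetry $\int_a^b f(a+b-x)\,dx=\int_a^b f(x)\,dx$ turns the right-hand side into $\int_a^b f(x)\,dx$. It then remains to evaluate the refining integral $\int_a^b f\!\left(\tfrac{|2x-a-b|}{2}\right)dx$. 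Splitting $[a,b]$ at the midpoint $\tfrac{a+b}{2}$ and substituting $y=\tfrac{|2x-a-b|}{2}$ on each half (so $y$ sweeps $[0,\tfrac{b-a}{2}]$ on each piece) shows this integral equals $2\int_0^{(b-a)/2} f(y)\,dy$. Dividing by $b-a$ gives the first claimed inequality.

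For the second inequality I would integrate the weighted refinement \eqref{eq_2-conv_pf} over $t\in[0,1]$. The right-hand side $\int_0^1[(1-t)f(a)+tf(b)]\,dt$ equals $\tfrac{f(a)+f(b)}{2}$, and the substitution $x=(1-t)a+tb$ converts $\int_0^1 f((1-t)a+tb)\,dt$ into $\tfrac{1}{b-a}\int_a^b f(x)\,dx$. The work is in the second term $\int_0^1 f\!\left(\sqrt{t(1-t)}\,(b-a)\right)dt$. Since $\sqrt{t(1-t)}$ is symmetric about $t=\tfrac12$, this equals $2\int_0^{1/2} f\!\left(\sqrt{t(1-t)}\,(b-a)\right)dt$, on which interval $t\mapsto\sqrt{t(1-t)}\,(b-a)$ is an increasing bijection onto $[0,\tfrac{b-a}{2}]$. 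Setting $x=\sqrt{t(1-t)}\,(b-a)$ gives $x^2=t(1-t)(b-a)^2$ and $2x\,dx=(1-2t)(b-a)^2\,dt$; combining $(1-2t)^2=1-4t(1-t)=1-\tfrac{4x^2}{(b-a)^2}$ with $1-2t\ge0$ on $[0,\tfrac12]$ yields $1-2t=\tfrac{\sqrt{(b-a)^2-4x^2}}{b-a}$, hence $dt=\tfrac{2x\,dx}{(b-a)\sqrt{(b-a)^2-4x^2}}$. This transforms the term into $\tfrac{1}{b-a}\int_0^{(b-a)/2}\tfrac{4xf(x)}{\sqrt{(b-a)^2-4x^2}}\,dx$, which is exactly the refining integral in the statement.

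The main obstacle is this last change of variables: one must correctly account for the non-monotonicity of $\sqrt{t(1-t)}$ over all of $[0,1]$ (resolved by the symmetry reduction to $[0,\tfrac12]$) and produce the Jacobian factor $\tfrac{2x}{(b-a)\sqrt{(b-a)^2-4x^2}}$, the crux being the identity $1-2t=\tfrac{\sqrt{(b-a)^2-4x^2}}{b-a}$ and the sign choice that makes it valid. By contrast, the splitting argument for the first inequality is routine once \eqref{8} is applied to the reflected pair.
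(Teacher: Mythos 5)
Your proposal is correct and takes essentially the same route as the paper: your first argument, applying \eqref{8} to the reflected pair $(x,\,a+b-x)$ and integrating over $x\in[a,b]$, is the paper's proof under the substitution $x=(1-t)a+tb$ (where the reflected point is $ta+(1-t)b$ and the refining term $f\bigl(\tfrac{|2x-a-b|}{2}\bigr)$ is the paper's $f\bigl(\tfrac{|1-2t|(b-a)}{2}\bigr)$), and your second argument integrates \eqref{eq_2-conv_pf} over $t\in[0,1]$ with the identical symmetry reduction to $[0,\tfrac12]$ and substitution $x=\sqrt{t(1-t)}\,(b-a)$. Your explicit Jacobian computation, including the sign choice $1-2t=\sqrt{(b-a)^2-4x^2}\,/(b-a)$ on $[0,\tfrac12]$, merely fills in the change of variables that the paper states without detail.
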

\begin{proof}
For the first inequality, the Inequality \eqref{8} implies
\begin{align*}
f\left(\frac{a+b}{2}\right)&=f\left(\frac{(1-t)a+tb+ta+(1-t)b)}{2}\right)\\
&\leq \frac{f((1-t)a+tb)+f(ta+(1-t)b)}{2}-f\left(\frac{|(1-t)a+tb-ta-(1-t)b|}{2}\right)\\
&=\frac{f((1-t)a+tb)+f(ta+(1-t)b)}{2}-f\left(\frac{|1-2t|(b-a)}{2}\right).
\end{align*}
Integrating this last inequality over the interval $[0,1]$, noting symmetry of $|1-2t|$ about $t=\frac{1}{2}$ and calculating the integrals
$$\int_{0}^{1}f\left( \left( 1-t \right)a+tb \right)dt=\int_{0}^{1}f(ta+(1-t)b)dt=\frac{1}{b-a}\int_{a}^{b}f(x)dx,$$
and
\begin{align*}
\int_{0}^{1}f\left(\frac{|1-2t|(b-a)}{2}\right)dt&=2\int_{0}^{\frac{1}{2}}f\left(\frac{(1-2t)(b-a)}{2}\right)dt\\
&=\frac{2}{b-a}\int_{0}^{\frac{b-a}{2}}f(x)dx
\end{align*}
imply the first desired inequality.

For the second desired inequality, Corollary \ref{cor_2-conv_ref} implies
$$f\left( \left( 1-t \right)a+tb \right)+f\left( \sqrt {t(1-t)} \left| a-b \right| \right)\le \left( 1-t \right)f\left( a \right)+tf\left( b \right),0\leq t\leq1.$$ Noting that the quantity $ f\left( \sqrt {t(1-t)} \left| a-b \right| \right)$ is symmetric about $\frac{1}{2}$, integrating the above inequality over the interval $[0,1]$ implies
\begin{align}\label{needed_1}
\int_{0}^{1}f\left( \left( 1-t \right)a+tb \right)dt+2\int_{0}^{\frac{1}{2}}f\left( \sqrt {t(1-t)} \left| a-b \right| \right)dt\leq \int_{0}^{1}\left(\left( 1-t \right)f\left( a \right)+tf\left( b \right)\right)dt.
\end{align}
Noting that 
$$\int_{0}^{1}f\left( \left( 1-t \right)a+tb \right)dt=\frac{1}{b-a}\int_{a}^{b}f(x)dx,$$
$$2\int_{0}^{\frac{1}{2}}f\left( \sqrt {t(1-t)} \left| a-b \right| \right)dt=\frac{1}{b-a}\int_{0}^{\frac{b-a}{2}}\frac{4xf(x)}{\sqrt{(b-a)^2-4x^2}}dx; x= \sqrt {t(1-t)} \left| a-b \right|,$$ and
$$ \int_{0}^{1}\left(\left( 1-t \right)f\left( a \right)+tf\left( b \right)\right)dt=\frac{f(a)+f(b)}{2},$$
implies the desired inequality.
\end{proof}

Further, we have the following integral inequality, as a special case of \eqref{eq_HH_intro}. The general case is stated in Theorem \ref{thm_HH_p} below.
\begin{theorem}\label{thm_int}
Let $f$ be 2-radical convex. Then 
$$f\left(\frac{1}{2}\right)\leq \int_{0}^{1}\left\{f\left(\frac{x+\frac{1}{2}}{2}\right)+f\left(\frac{|x-\frac{1}{2}|}{2}\right)\right\}dx\leq \int_{0}^{1}f(x)dx.$$
\end{theorem}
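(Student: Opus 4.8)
The plan is to read this statement as the continuous counterpart of Theorem \ref{10}, specialized to the ``identity data'' $g(x)=x$ on $[0,1]$, whose average $\int_0^1 x\,dx=\tfrac12$ plays the role of the weighted mean $\sum_i w_i x_i$. Thus $f(\tfrac12)$ sits in place of $f(\sum_i w_i x_i)$, the integrand $f\big(\tfrac{x+1/2}{2}\big)+f\big(\tfrac{|x-1/2|}{2}\big)$ replaces the bracketed sum, and $\int_0^1 f(x)\,dx$ replaces $\sum_i w_i f(x_i)$. The two engines are the pointwise inequality \eqref{8} and the continuous Jensen inequality \eqref{eq_jensen_cont_intro}, both available because $f$ is convex and nonnegative (as recorded among the basic properties proved above).

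For the left inequality, first I would apply \eqref{eq_jensen_cont_intro} on $[0,1]$ to the function $x\mapsto \tfrac{x+1/2}{2}$. A one-line computation gives $\int_0^1 \tfrac{x+1/2}{2}\,dx=\tfrac12$, so Jensen yields
\[
f\Big(\tfrac12\Big)\le \int_0^1 f\Big(\tfrac{x+1/2}{2}\Big)\,dx.
\]
Since $f\ge 0$, the omitted summand $\int_0^1 f\big(\tfrac{|x-1/2|}{2}\big)\,dx$ is nonnegative, so adding it to the right-hand side only enlarges it; this delivers the lower bound $f(\tfrac12)\le \int_0^1\{\cdots\}\,dx$.

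For the right inequality, I would specialize \eqref{8} to $a=\tfrac12$, $b=x$, obtaining for each $x\in[0,1]$
\[
f\Big(\tfrac{x+1/2}{2}\Big)+f\Big(\tfrac{|x-1/2|}{2}\Big)\le \frac{f(\tfrac12)+f(x)}{2},
\]
and then integrate over $[0,1]$ to get $\int_0^1\{\cdots\}\,dx\le \tfrac12 f(\tfrac12)+\tfrac12\int_0^1 f(x)\,dx$. To absorb the stray term $\tfrac12 f(\tfrac12)$, I would once more invoke \eqref{eq_jensen_cont_intro} with $g(x)=x$ (mean $\tfrac12$) to get $f(\tfrac12)\le \int_0^1 f(x)\,dx$; substituting this and combining the two halves closes the chain at $\int_0^1 f(x)\,dx$.

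There is no serious obstacle here: all integrals are finite by continuity of $f$, and the argument is essentially bookkeeping of two mean computations, namely $\int_0^1 x\,dx=\tfrac12$ and $\int_0^1 \tfrac{x+1/2}{2}\,dx=\tfrac12$. The only point requiring a moment's care is the upper bound, where the naive integration of \eqref{8} leaves the extra $\tfrac12 f(\tfrac12)$ that must be controlled by the midpoint estimate $f(\tfrac12)\le\int_0^1 f$; this is exactly the mechanism by which the final inequality of Theorem \ref{10} was closed, so the two proofs run in parallel.
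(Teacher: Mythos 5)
Your proof is correct, but it takes a genuinely different route from the paper. The paper derives the theorem from the discrete Theorem \ref{10} by a limiting argument: it takes $w_i=\frac1n$, $x_i=\frac{i}{n}$, interprets all three members of the discrete inequality as Riemann sums, and then justifies convergence of the middle sums (whose summands depend on $n$ through $\sum_j w_jx_j$) by an explicit $\epsilon$-argument using continuity of $f$. You instead work entirely at the continuous level: the upper bound comes from integrating the pointwise inequality \eqref{8} with $a=\tfrac12$, $b=x$ over $[0,1]$ and absorbing the stray $\tfrac12 f(\tfrac12)$ via the continuous Jensen inequality \eqref{eq_jensen_cont_intro} applied to $g(x)=x$, and the lower bound comes from Jensen applied to $x\mapsto \tfrac{x+1/2}{2}$ (whose mean is $\tfrac12$) together with $f\ge 0$ to discard the second summand. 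Both steps check out: $\int_0^1 \frac{x+1/2}{2}\,dx=\tfrac12$, the hypotheses of \eqref{eq_jensen_cont_intro} hold since $f$ is convex and continuous, and nonnegativity of $f$ follows from $f(0)=0$ and monotonicity as recorded in the paper's Proposition on basic properties. What each approach buys: yours is shorter and cleaner, sidestepping the limit-interchange bookkeeping entirely (arguably it is also more airtight, since the paper's $\epsilon$-estimate is stated somewhat informally and implicitly relies on uniform control of $f$ near the mean), and it transfers verbatim to any interval $[a,b]$, which would give Theorem \ref{thm_HH_p} directly; the paper's route, on the other hand, makes explicit that the continuous statement is the limit of the discrete Theorem \ref{10}, and that discretization template is what the paper then reuses for Theorem \ref{thm_HH_p} and the continuous Jensen corollary. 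One small point of parallelism you correctly identified: your trick of closing the chain with $f(\tfrac12)\le\int_0^1 f$ mirrors exactly how the final inequality of Theorem \ref{10} is closed in the discrete setting.
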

\begin{proof}
  From Theorem \ref{10}, \[f\left( \sum\limits_{i=1}^{n}{{{w}_{i}}{{x}_{i}}} \right)\le \sum\limits_{i=1}^{n}{{{w}_{i}}f\left( \frac{\sum\nolimits_{j=1}^{n}{{{w}_{j}}{{x}_{j}}}+{{x}_{i}}}{2} \right)}+\sum\limits_{i=1}^{n}{{{w}_{i}}f\left( \frac{\left| \sum\nolimits_{j=1}^{n}{{{w}_{j}}{{x}_{j}}}-{{x}_{i}} \right|}{2} \right)}\le \sum\limits_{i=1}^{n}{{{w}_{i}}f\left( {{x}_{i}} \right)},\]
for any positive $w_i$'s with $\sum_{i=1}^{n}w_i=1$ and any $x_i\geq 0.$ In particular, for $n\in\mathbb{N}$, let $w_i=\frac{1}{n}$ and $w_i=\frac{i}{n}.$
Since $f$ is convex non-negative and $f(0)=0,$ it follows that $f$ is increasing. Therefore,  Riemann sums entail the following two integrals
$$\lim_{n\to\infty}\sum_{i=1}^{n}w_ix_i= \int_{0}^{1}xdx=\frac{1}{2}$$ and
$$\lim_{n\to\infty}\sum_{i=1}^{n}w_if(x_i)=\int_{0}^{1}f(x)dx.$$ To complete the proof of the theorem, it remains to show that
\begin{align}\label{needed_2}
&\lim_{n\to\infty}\left\{\sum\limits_{i=1}^{n}{{{w}_{i}}f\left( \frac{\sum\nolimits_{j=1}^{n}{{{w}_{j}}{{x}_{j}}}+{{x}_{i}}}{2} \right)}+\sum\limits_{i=1}^{n}{{{w}_{i}}f\left( \frac{\left| \sum\nolimits_{j=1}^{n}{{{w}_{j}}{{x}_{j}}}-{{x}_{i}} \right|}{2} \right)}\right\}\notag\\
&=\int_{0}^{1}\left\{f\left(\frac{x+\frac{1}{2}}{2}\right)+f\left(\frac{|x-\frac{1}{2}|}{2}\right)\right\}dx.
\end{align}
Since $\lim_{n\to\infty}\sum_{i=1}^{n}w_ix_i=\frac{1}{2},$ and $f$ is continuous, then given a positive number $\epsilon$, there exists $n_{\epsilon}\in\mathbb{N}$ such that 

$$f\left(\frac{\frac{1}{2}+x_i}{2}\right)-\epsilon<f\left( \frac{\sum\nolimits_{j=1}^{n}{{{w}_{j}}{{x}_{j}}}+{{x}_{i}}}{2} \right)<f\left(\frac{\frac{1}{2}+x_i}{2}\right)+\epsilon, \forall n\geq n_{\epsilon}.$$

Therefore, for $ n\geq n_{\epsilon}$,
\begin{align}
\sum_{i=1}^{n}w_i f\left(\frac{\frac{1}{2}+x_i}{2}\right)-\epsilon<\sum_{i=1}^{n}w_i f\left( \frac{\sum\nolimits_{j=1}^{n}{{{w}_{j}}{{x}_{j}}}+{{x}_{i}}}{2} \right)<\sum_{i=1}^{n}w_i f\left(\frac{\frac{1}{2}+x_i}{2}\right)+\epsilon.\label{needed_3}
\end{align}
Clearly,
$$\lim_{n\to\infty}\sum_{i=1}^{n}w_if\left(\frac{\frac{1}{2}+x_i}{2}\right)=\int_{0}^{1}f\left(\frac{\frac{1}{2}+x}{2}\right) dx.$$
Consequently, \eqref{needed_3} implies, for arbitrarily small $\epsilon>0$,
\begin{align}
\int_{0}^{1}f\left(\frac{\frac{1}{2}+x}{2}\right) dx-\epsilon\leq \limsup_n 
\sum\limits_{i=1}^{n}{{{w}_{i}}f\left( \frac{\sum\nolimits_{j=1}^{n}{{{w}_{j}}{{x}_{j}}}+{{x}_{i}}}{2} \right)}\leq \int_{0}^{1}f\left(\frac{\frac{1}{2}+x}{2}\right) dx+\epsilon,\label{needed_4}
\end{align}
Letting $\epsilon\to 0^+$ implies that
$$\lim_{n\to\infty}
\sum\limits_{i=1}^{n}{{{w}_{i}}f\left( \frac{\sum\nolimits_{j=1}^{n}{{{w}_{j}}{{x}_{j}}}+{{x}_{i}}}{2} \right)}=\int_{0}^{1}f\left(\frac{\frac{1}{2}+x}{2}\right) dx.$$

A similar argument implies that
$$\lim_{n\to\infty}\sum\limits_{i=1}^{n}{{{w}_{i}}f\left( \frac{\left| \sum\nolimits_{j=1}^{n}{{{w}_{j}}{{x}_{j}}}-{{x}_{i}} \right|}{2} \right)}=\int_{0}^{1} f\left(\frac{|x-\frac{1}{2}|}{2}\right)dx.$$
The last two identities together with \eqref{needed_2} imply the desired result.
\end{proof}
An unexpected property of 2-radical convex functions that follows from Theorem \ref{thm_int} is the following integrals bounds .
\begin{corollary}\label{cor_int}
Let $f$ be 2-radical convex. Then
$$ 3\int_{0}^{\frac{1}{4}}f(x)dx+\int_{\frac{1}{4}}^{\frac{3}{4}}f(x)dx\leq \int_{\frac{3}{4}}^{1}f(x)dx.$$
\end{corollary}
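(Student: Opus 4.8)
The plan is to read off Corollary~\ref{cor_int} directly from the right half of Theorem~\ref{thm_int}, namely from the inequality
$$\int_{0}^{1}\left\{f\left(\frac{x+\frac{1}{2}}{2}\right)+f\left(\frac{|x-\frac{1}{2}|}{2}\right)\right\}dx\leq \int_{0}^{1}f(x)dx,$$
by evaluating the two pieces of the middle integrand via linear changes of variable. For the first piece I would substitute $u=\frac{x+\frac12}{2}$, an affine map carrying $[0,1]$ onto $[\frac14,\frac34]$ with $dx=2\,du$; this gives $\int_{0}^{1}f\left(\frac{x+\frac12}{2}\right)dx=2\int_{1/4}^{3/4}f(u)\,du$.

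For the second piece I would split the integral at $x=\frac12$ to remove the absolute value. On $[0,\frac12]$ the argument $\frac{|x-1/2|}{2}=\frac14-\frac{x}{2}$ is affine onto $[0,\frac14]$, and on $[\frac12,1]$ the argument $\frac{x}{2}-\frac14$ is affine onto $[0,\frac14]$; in both cases the substitution contributes $2\int_{0}^{1/4}f$. Adding the two halves yields $\int_{0}^{1}f\left(\frac{|x-\frac12|}{2}\right)dx=4\int_{0}^{1/4}f(x)\,dx$.

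Substituting these two evaluations back into the displayed inequality produces
$$2\int_{1/4}^{3/4}f(x)\,dx+4\int_{0}^{1/4}f(x)\,dx\leq\int_{0}^{1}f(x)\,dx.$$
I would then split the right-hand side over the partition $0<\tfrac14<\tfrac34<1$, writing $\int_0^1 f=\int_0^{1/4}f+\int_{1/4}^{3/4}f+\int_{3/4}^{1}f$, and cancel the common terms. What remains is exactly $3\int_{0}^{1/4}f(x)\,dx+\int_{1/4}^{3/4}f(x)\,dx\leq\int_{3/4}^{1}f(x)\,dx$, the assertion.

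The argument is essentially a bookkeeping exercise and I do not expect a genuine obstacle; the only point demanding care is the second piece, where the absolute value forces the split at $x=\frac12$ and one must verify that \emph{both} halves land on $[0,\frac14]$, so that the coefficient there is $4$ rather than $2$. Getting that factor right is what ultimately yields the coefficient $3$ after cancellation.
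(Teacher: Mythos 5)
Your proposal is correct and takes essentially the same route as the paper: the paper's proof likewise starts from the right-hand inequality of Theorem \ref{thm_int}, splits the absolute-value integral at $x=\tfrac12$, and performs the same three affine substitutions (the paper's $y=\tfrac{x+\frac12}{2}$, $z=\tfrac{\frac12-x}{2}$, $w=\tfrac{x-\frac12}{2}$), with your coefficients $2$ and $4$ and the final cancellation against $\int_0^1 f=\int_0^{1/4}f+\int_{1/4}^{3/4}f+\int_{3/4}^{1}f$ exactly matching the bookkeeping the paper leaves implicit.
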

\begin{proof}
From Theorem \ref{thm_int}, we have
$$ \int_{0}^{1}\left\{f\left(\frac{x+\frac{1}{2}}{2}\right)+f\left(\frac{|x-\frac{1}{2}|}{2}\right)\right\}dx\leq \int_{0}^{1}f(x)dx.$$
Noting that 
$$\int_{0}^{1}f\left(\frac{|x-\frac{1}{2}|}{2}\right)dx=\int_{0}^{\frac{1}{2}}f\left(\frac{\frac{1}{2}-x}{2}\right)dx+\int_{\frac{1}{2}}^{1}f\left(\frac{x-\frac{1}{2}}{2}\right)dx,$$
then substituting $\frac{x+\frac{1}{2}}{2}=y, \frac{\frac{1}{2}-x}{2}=z, \frac{x-\frac{1}{2}}{2}=w$ imply the desired inequality.
\end{proof}
What Corollary \ref{cor_int} says is that, on average, the values of a 2-radical convex function on the interval $\left[\frac{3}{4},1\right]$ are much bigger than its values on the interval $\left[0,\frac{3}{4}\right].$\\
Numerical examples show these differences!

In fact, the proof of Theorem \ref{thm_int} can be carried out over any interval $[a,b]$ to obtain the following natural generalization of the Hermite-Hadamard inequality \eqref{eq_HH_intro}.

\begin{theorem}\label{thm_HH_p}
Let $f$ be $2-$radical convex and let $b,a> 0.$ Then 
\begin{align}\label{eq_HH_p-conv}
f\left(\frac{a+b}{2}\right)\leq \frac{1}{b-a}\int_{a}^{b}\left\{f\left(\frac{x+\frac{a+b}{2}}{2}\right)+f\left(\frac{|x-\frac{a+b}{2}|}{2}\right)\right\}dx\leq \frac{1}{b-a} \int_{a}^{b}f(x)dx.
\end{align}
\end{theorem}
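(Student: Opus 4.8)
The plan is to obtain \eqref{eq_HH_p-conv} as the Riemann-sum limit of the discrete refinement in Theorem \ref{10}, exactly as Theorem \ref{thm_int} was derived from Theorem \ref{10} in the special case $a=0$, $b=1$. The passage from $[0,1]$ to a general $[a,b]$ is only a matter of choosing the right sample points, and the analytic content of the limiting argument is identical.

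First I would fix $n\in\mathbb{N}$ and apply Theorem \ref{10} with the uniform weights $w_i=\frac{1}{n}$ and the equally spaced nodes $x_i=a+\frac{i}{n}(b-a)$, $i=1,\dots,n$, all of which lie in $[a,b]\subset[0,\infty)$. Writing $S_n=\sum_{j=1}^{n}w_j x_j$ for the ($n$-dependent) weighted mean, Theorem \ref{10} gives
\[ f(S_n)\le \sum_{i=1}^{n}w_i\left\{f\left(\frac{S_n+x_i}{2}\right)+f\left(\frac{|S_n-x_i|}{2}\right)\right\}\le \sum_{i=1}^{n}w_i f(x_i). \]
Since $f$ is continuous (hence Riemann integrable on $[a,b]$) and $\frac{1}{n}\sum_{i=1}^{n}\varphi(x_i)$ is a Riemann sum for $\frac{1}{b-a}\int_a^b\varphi$, a direct computation gives $S_n=a+\frac{(b-a)(n+1)}{2n}\to\frac{a+b}{2}$ and $\sum_{i=1}^{n}w_i f(x_i)\to\frac{1}{b-a}\int_a^b f(x)\,dx$; continuity of $f$ then yields $f(S_n)\to f\left(\frac{a+b}{2}\right)$. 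These produce the outer two members of \eqref{eq_HH_p-conv}.

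The step requiring care, and the one genuinely inherited from Theorem \ref{thm_int}, is the convergence of the middle sum, because its summands contain the \emph{moving} quantity $S_n$ in the arguments $\frac{S_n+x_i}{2}$ and $\frac{|S_n-x_i|}{2}$ rather than the limiting value $\frac{a+b}{2}$. To handle this I would reproduce the $\epsilon$-argument of Theorem \ref{thm_int}: given $\epsilon>0$, the continuity of $f$ together with $S_n\to\frac{a+b}{2}$ furnishes $n_\epsilon$ so that
\[ f\left(\frac{\frac{a+b}{2}+x_i}{2}\right)-\epsilon<f\left(\frac{S_n+x_i}{2}\right)<f\left(\frac{\frac{a+b}{2}+x_i}{2}\right)+\epsilon \]
for every $i$ and all $n\ge n_\epsilon$, and similarly for the $|S_n-x_i|$ term. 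Multiplying by $w_i$, summing, and sandwiching between the two Riemann sums of $f\left(\frac{x+\frac{a+b}{2}}{2}\right)+f\left(\frac{|x-\frac{a+b}{2}|}{2}\right)$ shifted by $\pm\epsilon$, then letting $n\to\infty$ and finally $\epsilon\to0^+$, identifies the limit of the middle sum with $\frac{1}{b-a}\int_a^b\left\{f\left(\frac{x+\frac{a+b}{2}}{2}\right)+f\left(\frac{|x-\frac{a+b}{2}|}{2}\right)\right\}dx$. Passing to the limit in the displayed double inequality yields \eqref{eq_HH_p-conv}. The only real obstacle is this interchange of the limit with the nonlinear $f$ applied to the drifting mean $S_n$; everything else is routine Riemann-sum bookkeeping.
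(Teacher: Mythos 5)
Your proposal is correct and takes essentially the same route as the paper: its proof of Theorem \ref{thm_HH_p} likewise applies Theorem \ref{10} with $w_i=\frac{1}{n}$ and $x_i=a+\frac{b-a}{n}i$, notes that $\sum_{i=1}^{n}w_ix_i\to\frac{a+b}{2}$ so continuity gives the left member, and disposes of the middle and right members ``adopting similar proof to Theorem \ref{thm_int}'' --- precisely the $\epsilon$-sandwich for the drifting mean $S_n$ that you wrote out in detail. If anything, you made explicit (via uniform continuity of $f$ on the compact set containing all arguments $\frac{S_n+x_i}{2}$ and $\frac{|S_n-x_i|}{2}$) a uniformity-in-$i$ step that the paper leaves implicit.
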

\begin{proof}
Let $f$ be $p-$radical convex and assume $b>a.$ If $w_i=\frac{1}{n}$ and $x_i=a+\frac{b-a}{n}i,$ then
$$\sum_{i=1}^{n}w_ix_i=\frac{1}{b-a}\sum_{i=1}^{n}\frac{b-a}{n}x_i\to \frac{1}{b-a}\int_{a}^{b}xdx=\frac{b+a}{2}.$$ Since $f$ is continuous, it follows that $$f\left(\sum_{i=1}^{n}w_ix_i\right)\to f\left(\frac{a+b}{2}\right).$$ This implies the left hand side of \eqref{eq_HH_p-conv}. The middle and right sides of \eqref{eq_HH_p-conv} follow similarly, adopting similar proof to Theorem \ref{thm_int}.
\end{proof}

Another application of $2$-radical convex functions is the following new form of the continuous Jensen inequality \eqref{eq_jensen_cont_intro}.
\begin{corollary}
Let $f$ be 2-radical convex and let $g:[a,b]\to [0,\infty)$ be  continuous. Then
\begin{align*}
f\left(\frac{1}{b-a}\int_{a}^{b}g(x)dx\right)&\leq \frac{1}{b-a}\int_{a}^{b}\left\{f\left(\frac{x+\frac{1}{b-a}\int_{a}^{b}g(x)dx}{2}\right)+f\left(\frac{|x-\frac{1}{b-a}\int_{a}^{b}g(x)dx|}{2}\right)\right\}dx\\
&\leq \frac{1}{b-a}\int_{0}^{1}f(g(x))dx.
\end{align*} 
\end{corollary}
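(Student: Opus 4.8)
The plan is to derive the inequality as the continuous (integral) analogue of the discrete refinement in Theorem \ref{10}: the cleanest route applies the two-point inequality \eqref{8} pointwise and then invokes the classical continuous Jensen inequality \eqref{eq_jensen_cont_intro}. Write $m:=\frac{1}{b-a}\int_a^b g(x)\,dx$ for the mean value, which plays the role of the discrete weighted mean $\sum_j w_j x_j$; in the continuous limit the sample $x_i$ is replaced by $g(x)$, so the two inner arguments read $\frac{g(x)+m}{2}$ and $\frac{|g(x)-m|}{2}$, and the goal is
$$f(m)\le \frac{1}{b-a}\int_a^b\Bigl\{f\Bigl(\tfrac{g(x)+m}{2}\Bigr)+f\Bigl(\tfrac{|g(x)-m|}{2}\Bigr)\Bigr\}\,dx\le \frac{1}{b-a}\int_a^b f(g(x))\,dx.$$

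For the right-hand inequality I would apply inequality \eqref{8} (the $t=\tfrac12$ case of Corollary \ref{cor_2-conv_ref}) with the two arguments $m$ and $g(x)$, giving for every $x\in[a,b]$ the pointwise bound $f\bigl(\frac{g(x)+m}{2}\bigr)+f\bigl(\frac{|g(x)-m|}{2}\bigr)\le \frac12\bigl(f(m)+f(g(x))\bigr)$. Averaging over $[a,b]$ produces the middle quantity on the left and $\frac12\bigl(f(m)+\frac{1}{b-a}\int_a^b f(g(x))\,dx\bigr)$ on the right. Finally, since $f$ is $2$-radical convex it is convex, so the continuous Jensen inequality \eqref{eq_jensen_cont_intro} gives $f(m)\le \frac{1}{b-a}\int_a^b f(g(x))\,dx$; substituting this collapses the right side to $\frac{1}{b-a}\int_a^b f(g(x))\,dx$.

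For the left-hand inequality the key observation is that the first inner argument has mean exactly $m$, since $\frac{1}{b-a}\int_a^b \frac{g(x)+m}{2}\,dx=\frac12(m+m)=m$. Applying continuous Jensen \eqref{eq_jensen_cont_intro} to the convex function $f$ and the integrand $\frac{g(x)+m}{2}$ therefore yields $\frac{1}{b-a}\int_a^b f\bigl(\frac{g(x)+m}{2}\bigr)\,dx\ge f(m)$; and since $f\ge 0$ the second inner term contributes a nonnegative amount. Adding the two gives $f(m)\le$ the middle expression, as required.

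The only genuine subtlety is the integrability of the integrands and the admissibility of \eqref{eq_jensen_cont_intro}. Both are immediate here: $g$ is continuous on the compact interval $[a,b]$, hence bounded with compact range, and $f$ is continuous, so $x\mapsto f\bigl(\frac{g(x)+m}{2}\bigr)$ and $x\mapsto f\bigl(\frac{|g(x)-m|}{2}\bigr)$ are continuous and thus Riemann integrable. I therefore expect no real obstacle. An alternative proof, matching the style of Theorem \ref{thm_int} and Theorem \ref{thm_HH_p}, is to take $w_i=\tfrac1n$ and $x_i=g\bigl(a+\tfrac{(b-a)i}{n}\bigr)$ in Theorem \ref{10} and pass to the Riemann-sum limit; there the only extra care needed is the uniform-continuity and interchange-of-limits argument already carried out in the proof of Theorem \ref{thm_int}.
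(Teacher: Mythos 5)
Your proof is correct, but it takes a genuinely more direct route than the paper's. The paper's own proof is a one-line reduction: replace $x_i$ by $g(x_i)$ in the discrete Theorem \ref{10} and pass to the Riemann-sum limit ``as in Theorems \ref{thm_int} and \ref{thm_HH_p}'', which implicitly requires repeating the $\epsilon$--$n_\epsilon$ interchange-of-limits bookkeeping carried out in the proof of Theorem \ref{thm_int}. You instead work entirely at the continuous level: apply the two-point inequality \eqref{8} pointwise with arguments $m=\frac{1}{b-a}\int_a^b g(x)\,dx$ and $g(x)$, integrate over $[a,b]$, and finish with the continuous Jensen inequality \eqref{eq_jensen_cont_intro} --- once to get $f(m)\le\frac{1}{b-a}\int_a^b f(g(x))\,dx$ and collapse the right-hand side, and once, via the observation $\frac{1}{b-a}\int_a^b\frac{g(x)+m}{2}\,dx=m$ together with $f\ge 0$, to get the left-hand inequality. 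The key lemma is the same (inequality \eqref{8}, the $t=\frac12$ case of Corollary \ref{cor_2-conv_ref}), but your execution bypasses the discrete approximation entirely, so it is shorter, needs no uniform-continuity argument, and its integrability checks are trivial; the paper's route buys only the economy of recycling Theorem \ref{10} verbatim. One further point in your favor: you have (correctly) read the middle terms as $f\bigl(\frac{g(x)+m}{2}\bigr)+f\bigl(\frac{|g(x)-m|}{2}\bigr)$ and the final integral as $\frac{1}{b-a}\int_a^b f(g(x))\,dx$. As literally printed --- with the integration variable $x$ in place of $g(x)$ and with $\int_0^1$ --- the corollary is false (take $g\equiv 0$ on $[0,1]$: the middle term becomes $2\int_0^1 f(x/2)\,dx>0$ while the right side is $f(0)=0$), so the printed statement contains typos that your reading, consistent with the paper's instruction to replace $x_i$ by $g(x_i)$, repairs.
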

\begin{proof}
Replacing $x_i$ in Theorem \ref{10} by $g(x_i)$, then proceeding like Theorems \ref{thm_int} and \ref{thm_HH_p} imply the desired inequalities. 
\end{proof}
\section{More general discussion}
As mentioned in the introduction, one of our goals is to show how convex functions behave differently, due to their convexity behavior.

Our first result in this section is the following necessary condition following from $p-$radical convexity.
\begin{proposition}\label{prop_p-conv_avg}
Let $f$ be $p-$radical convex, for some $p\geq 1$. Then, for every $x\geq 0,$ 
\begin{equation}\label{needed_0}
\int_{0}^{x}f(t)dt\leq \frac{x}{p+1}f(x).
\end{equation}
Equality in \eqref{needed_0} holds, for all $x>0$, if and only if $f(x)=cx^p,$ for some constant $c.$
\end{proposition}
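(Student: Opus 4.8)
The plan is to reduce everything to the elementary ``star-shaped'' inequality for a convex function vanishing at the origin. Write $g(u)=f\left(u^{1/p}\right)$, so that by hypothesis $g$ is convex on $[0,\infty)$ with $g(0)=0$, and $f(t)=g(t^p)$. The starting move is the change of variable $t=sx$, $s\in[0,1]$, which turns the left-hand side of \eqref{needed_0} into
$$\int_0^x f(t)\,dt = x\int_0^1 f(sx)\,ds = x\int_0^1 g\left(s^p x^p\right)\,ds.$$

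Next I would record the key pointwise bound. Since $g$ is convex with $g(0)=0$, for every $\lambda\in[0,1]$ and $u\ge 0$ one has $g(\lambda u)=g\left(\lambda u+(1-\lambda)\cdot 0\right)\le \lambda g(u)+(1-\lambda)g(0)=\lambda g(u)$. Applying this with $\lambda=s^p\in[0,1]$ and $u=x^p$ gives $g\left(s^p x^p\right)\le s^p g\left(x^p\right)=s^p f(x)$. Substituting into the display above and using $\int_0^1 s^p\,ds=\tfrac{1}{p+1}$ yields
$$\int_0^x f(t)\,dt \le x\,f(x)\int_0^1 s^p\,ds = \frac{x}{p+1}f(x),$$
which is precisely \eqref{needed_0}.

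For the equality characterization I would argue both directions. If $f(x)=cx^p$, then $g(u)=cu$ is linear, hence convex, so $f$ is $p$-radical convex, and a direct computation gives equality for every $x>0$. Conversely, suppose equality holds for all $x>0$. Fixing $x>0$, the two integrands in the bound above satisfy $f(sx)\le s^p f(x)$ pointwise while having equal integrals; since $s\mapsto s^p f(x)-f(sx)$ is continuous, nonnegative, and has zero integral, it vanishes identically, so $f(sx)=s^p f(x)$ for all $s\in[0,1]$. Setting $c=f(1)$ and taking $x=1$ gives $f(s)=cs^p$ on $[0,1]$; for $y>1$, applying the relation with $x=y$ and $s=1/y\in(0,1)$ gives $f(1)=y^{-p}f(y)$, i.e. $f(y)=cy^p$. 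Together with $f(0)=0$ this yields $f(x)=cx^p$ on all of $[0,\infty)$.

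The main obstacle is essentially bookkeeping rather than a genuine difficulty: the inequality itself drops out immediately once the star-shaped bound $g(\lambda u)\le \lambda g(u)$ is in place. The only delicate point is the equality analysis---upgrading ``equal integrals'' to ``equal integrands'' (which relies on continuity and nonnegativity of $f$), and then running the scaling argument to pin down $f(x)=cx^p$ on the whole half-line rather than merely on $[0,1]$.
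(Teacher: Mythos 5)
Your proof is correct, and it takes a genuinely different --- and leaner --- route than the paper's. The paper first proves \eqref{needed_0} for twice differentiable $f$: convexity of $g(x)=f\left(x^{1/p}\right)$ gives $g''\geq 0$, hence the differential inequality $xf''(x)\geq (p-1)f'(x)$, which is then integrated by parts twice over $[0,x]$; the general case is handled by approximating $g$ uniformly by smooth convex functions (via Azagra's approximation theorem), transporting the approximation to $h_n(x)=g_n(x^p)$, and passing to the limit. Your substitution $t=sx$ combined with the star-shaped bound $g(\lambda u)\leq \lambda g(u)$ (valid since $g$ is convex with $g(0)=f(0)=0$) yields the inequality in a few lines with no smoothness assumption and no approximation machinery, which is the main thing your approach buys. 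The equality analysis also differs: the paper differentiates the assumed identity to obtain the ODE $f(x)=\frac{1}{p+1}\left(f(x)+xf'(x)\right)$ and solves it, which tacitly uses differentiability of $f$ (recoverable, since equality gives $f(x)=\frac{p+1}{x}\int_{0}^{x}f(t)dt$, whose right side is $C^1$ on $(0,\infty)$, but the paper glosses over this). Your route --- upgrading equal integrals to the pointwise identity $f(sx)=s^{p}f(x)$ via continuity and nonnegativity of the deficit, then scaling with $s=1/y$ to cover $y>1$ --- avoids the ODE entirely, and in fact proves slightly more: equality at a single $x_0>0$ already forces $f(t)=ct^{p}$ on $[0,x_0]$. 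One small phrasing quibble: the nonnegativity you invoke is that of the deficit $s^{p}f(x)-f(sx)$, which comes from the star-shaped bound rather than from nonnegativity of $f$ itself; as written this is harmless.
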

\begin{proof}
Assume first that $f$ is twice differentiable and let $g(x)=f\left(x^{\frac{1}{p}}\right).$ Since $g$ is convex, $f$ being $p-$radical convex, it follows that $g''\geq 0.$ This implies that $(1-p)f'\left(x^{\frac{1}{p}}\right)+x^{\frac{1}{p}}f''\left(x^{\frac{1}{p}}\right)\geq 0$ for all $x> 0.$ This implies that
$xf''(x)\geq (p-1)f'(x), $ for all $x> 0.$ Integrating this inequality on $[0,x]$ twice by parts implies the desired inequality, when $f$ is twice differentiable.\\
When $f$ is not twice differentiable, let $g(x)=f\left(x^{\frac{1}{p}}\right)$ and let $g_n$ be a sequence of twice differentiable convex functions such that $g_n\to g$ uniformly. Such a sequence can be found using \cite[Theorem 1]{azagra}. Let $h_n(x)=g_n(x^p).$ Since $g_n$ is convex, it follows that $h_n$ is convex, because $p\geq 1.$ Further, $h_n$ is twice differentiable and $h_n\left(x^{\frac{1}{p}}\right)=g_n(x),$ which is convex. That is, $h_n$ is a sequence of $p-$radical convex twice differentiable functions  such that $h_n\to f$ uniformly. Since \eqref{needed_0} is valid for $p-$radical convex twice differentiable functions, it is valid for $h_n$, and hence
$$\int_{0}^{x}h_n(t)dt\leq \frac{x}{p+1}h_n(x),\;\forall x>0.$$ Letting $n\to\infty$ and noting that $h_n\to f$ uniformly on the compact interval $[0,x]$ imply $$\int_{0}^{x}f(t)dt\leq \frac{x}{p+1}f(x).$$ This completes the proof of \eqref{needed_0}. For the equality condition, direct computations show that $f(x)=cx^p$ turns \eqref{needed_0} into an equality. Also, assuming equality in \eqref{needed_0} and differentiating, we obtain $f(x)=\frac{1}{p+1}(f(x)+xf'(x)).$ Solving this differential equation implies that $f(x)=cx^p$, for some constant $c$. This completes the proof.

\end{proof}

Our first observation about Proposition \ref{prop_p-conv_avg} is that a function cannot be $p-$radical convex, for all $p\geq 1,$ as we show next.
\begin{corollary}
A function $f:[0,\infty)\to [0,\infty)$ is $p-$radical convex for all $p\geq 1$ if and only if $f=0$.
\end{corollary}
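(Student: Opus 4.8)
The plan is to dispose of the easy direction first and then leverage the quantitative bound in Proposition \ref{prop_p-conv_avg}. For the reverse implication, if $f=0$ then $g(x)=f\left(x^{\frac{1}{p}}\right)=0$ is trivially convex with $g(0)=0$ for every $p\geq 1$, and $f$ is continuous with $f(0)=0$; hence $f$ is $p$-radical convex for all $p$. This direction requires no real work.

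For the forward implication, suppose $f$ is $p$-radical convex for every $p\geq 1$. The key observation is that Proposition \ref{prop_p-conv_avg} is available for each such $p$ simultaneously, yielding the whole family of inequalities
$$\int_{0}^{x}f(t)\,dt\leq \frac{x}{p+1}f(x),\quad \forall x\geq 0,\ \forall p\geq 1.$$
Now I would fix $x>0$ and let $p\to\infty$. The left-hand side is a nonnegative constant independent of $p$, while the right-hand side tends to $0$ since $x$ and $f(x)$ are held fixed. Therefore $\int_{0}^{x}f(t)\,dt\leq 0$ for every $x>0$. Combined with $f\geq 0$, which forces $\int_{0}^{x}f(t)\,dt\geq 0$, we conclude
$$\int_{0}^{x}f(t)\,dt=0,\quad \forall x>0.$$

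Finally I would upgrade this to pointwise vanishing. Because $f$ is continuous, the function $x\mapsto \int_{0}^{x}f(t)\,dt$ is differentiable with derivative $f(x)$; differentiating the identity above gives $f(x)=0$ for all $x>0$, and $f(0)=0$ by hypothesis, so $f\equiv 0$. I do not anticipate a genuine obstacle: the argument rests entirely on exploiting the $p$-dependence of the constant in Proposition \ref{prop_p-conv_avg} by sending $p\to\infty$. The only point deserving a word of care is the passage from the vanishing of all the integrals to the vanishing of $f$ itself, which is precisely where the continuity of $f$ built into Definition \ref{def} is used.
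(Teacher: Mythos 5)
Your proof is correct and follows exactly the paper's argument: apply Proposition \ref{prop_p-conv_avg} for all $p\geq 1$, let $p\to\infty$ with $x$ fixed to force $\int_{0}^{x}f(t)\,dt=0$, and differentiate using continuity of $f$. Your only addition is spelling out the trivial converse direction, which the paper leaves implicit.
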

\begin{proof}
 Assume that $f$ is $p-$radical convex for all $p\geq 1.$ Then Proposition \ref{prop_p-conv_avg} implies
$$\int_{0}^{x}f(t)dt\leq \frac{x}{p+1}f(x), \forall x>0, \forall p\geq 1.$$ Letting $p\to\infty$ and noting that $f\geq 0$ implies that
$$\int_{0}^{x}f(t)dt=0,\;\forall x>0,$$ which gives $f(x)=0,\;\forall x,$ upon differentiation. This completes the proof. 
\end{proof}

The inequality \eqref{needed_0} can be used sometimes to decide if a function is not $p-$radical convex. For example, consider the function $f(x)=e^x-1.$ This function is convex, increasing and $f(0)=0.$ Calculating
$$\int_{0}^{x}(e^t-1)dt=e^x-x-1\;{\text{and}}\;\frac{x}{3}f(x)=\frac{x(e^x-1)}{3}.$$ It is then clear that when $x=1$, we have 
$$e^x-x-1< \frac{x(e^x-1)}{3},$$ while the inequality is reversed when $x=3.$ This shows that the function $f(x)=e^x-1$ does not satisfy \eqref{needed_0} for all $x$, and hence it is not $2-$radical convex. However, it should be noted that $f(x)=e^x-1-x$ is $2-$radical convex.

Also, we notice that \eqref{needed_0} can be written as
\begin{equation}\label{eq_p_conv_avg}
\frac{1}{x}\int_{0}^{x}f(t)dt\leq \frac{1}{p+1}f(x).
\end{equation}
The left side of this inequality is the average value of $f$ over the interval $[0,x].$ The Hermite-Hadamard inequality assures that (when $f(0)=0$)
$$\frac{1}{x}\int_{0}^{x}f(t)dt\leq \frac{1}{2}f(x).$$ This shows that $p-$radical convex functions have tighter bounds than convex functions.

One more observation about \eqref{eq_p_conv_avg} is its similarity to the well known Hardy inequality \eqref{eq_hardy_intro}. We notice first that a convex function on $[0,\infty)$ with $f(0)=0$ is not an $L^p$ function, for any $p\geq 1.$ However, we have the following new Hardy-type inequality.
\begin{theorem}\label{thm_hardy}
Let $f$ be $p-$radical convex, and let $\alpha,\beta>0.$ Then, for $p\geq 1,$
\begin{equation}\label{eq_hardy}
\int_{\alpha}^{\beta}\left(\frac{1}{x}\int_{0}^{x}f(t)dt\right)^pdx\leq \left(\frac{1}{p+1}\right)^{p}\int_{\alpha}^{\beta}f(x)^pdx.
\end{equation}
The inequality is sharp, and the function $f(x)=x^p$ turns this inequality to an identity.
\end{theorem}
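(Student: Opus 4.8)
The plan is to obtain \eqref{eq_hardy} as an immediate consequence of the pointwise estimate already established in Proposition \ref{prop_p-conv_avg}. Recall that, written in the form \eqref{eq_p_conv_avg}, $p$-radical convexity gives
\[
\frac{1}{x}\int_{0}^{x}f(t)\,dt\le \frac{1}{p+1}f(x),\qquad x>0.
\]
Both sides here are nonnegative: indeed $f$ was shown to be increasing with $f(0)=0$, so $f\ge 0$ on $[0,\infty)$, and the left-hand average of a nonnegative function is nonnegative as well. This nonnegativity is exactly what licenses the next step.

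First I would raise this pointwise inequality to the power $p$. Since $p\ge 1$ and the map $u\mapsto u^{p}$ is increasing on $[0,\infty)$, applying it to both nonnegative sides preserves the inequality, yielding
\[
\left(\frac{1}{x}\int_{0}^{x}f(t)\,dt\right)^{p}\le \left(\frac{1}{p+1}\right)^{p}f(x)^{p},\qquad x>0.
\]
Next I would integrate this over $[\alpha,\beta]$. As $\alpha,\beta>0$, the integrand on the left is finite throughout the interval, and the factor $\left(\tfrac{1}{p+1}\right)^{p}$ is a constant that pulls out of the integral; integrating then produces \eqref{eq_hardy} verbatim.

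For the sharpness assertion I would test $f(x)=x^{p}$, which is $p$-radical convex. Here $\tfrac{1}{x}\int_{0}^{x}t^{p}\,dt=\tfrac{x^{p}}{p+1}$, so the displayed pointwise step holds with equality for every $x$, and integrating turns \eqref{eq_hardy} into an identity. More conceptually, the equality clause of Proposition \ref{prop_p-conv_avg} tells us that the pointwise inequality is an equality for all $x$ precisely when $f(x)=cx^{p}$, which is what underlies the sharpness.

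I do not expect a genuine obstacle, since the substantive content is entirely absorbed into Proposition \ref{prop_p-conv_avg}; what remains is monotonicity of the $p$-th power map and one integration. The only place warranting a word of care is the verification that both sides are nonnegative before taking $p$-th powers, and that is immediate from $f\ge 0$.
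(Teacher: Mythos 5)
Your proposal is correct and follows exactly the route the paper takes: the paper's proof consists of the single line ``The proof follows immediately from Proposition \ref{prop_p-conv_avg},'' and you have simply made explicit the routine details (nonnegativity of both sides, monotonicity of $u\mapsto u^{p}$ for $p\ge 1$, integration over $[\alpha,\beta]$, and verification of equality for $f(x)=x^{p}$) that the paper leaves to the reader.
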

\begin{proof}
The proof follows immediately from Proposition \ref{prop_p-conv_avg}.
\end{proof}

Further, we have the following better bound for $m$-radical convex functions, when $m\geq 2$ is an even integer.

\begin{theorem}\label{thm_m-convex}
Let $f$ be $m-$radical convex for some even integer $m\geq 2.$ Then
\begin{align*}
(1-t)f(a)+tf(b)&\geq\sum_{k=0}^{\frac{m}{2}}f\left({{m/2}\choose k}^{\frac{1}{m}}(t(1-t))^{\frac{k}{m}}(a-b)^{\frac{2k}{m}}((1-t)a+tb)^{1-\frac{2k}{m}}\right)\\
&= f((1-t)a+tb)+\sum_{k=1}^{\frac{m}{2}}f\left({{m/2}\choose k}^{\frac{1}{m}}(t(1-t))^{\frac{k}{m}}(a-b)^{\frac{2k}{m}}((1-t)a+tb)^{1-\frac{2k}{m}}\right)
\end{align*}
where $a,b>0$ and $0\leq t\leq 1.$ In particular,
$$\frac{f(a)+f(b)}{2}\geq \sum_{k=0}^{\frac{m}{2}}f\left({{m/2} \choose k}^{\frac{1}{m}} \left(\frac{a+b}{2}\right)^{\frac{2k}{m}}\left(\frac{a-b}{2}\right)^{1-\frac{2k}{m}}    \right).$$

\end{theorem}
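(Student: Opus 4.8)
The plan is to transfer everything to the convex function $g(x)=f\!\left(x^{1/m}\right)$ and exploit its superadditivity, exactly as in the proof of Theorem \ref{10}, but now distributing over $m/2+1$ terms instead of two. For $k=0,1,\dots,\frac{m}{2}$ let $y_k\geq 0$ be defined by
\[
y_k^{m}=\binom{m/2}{k}\bigl(t(1-t)\bigr)^{k}(a-b)^{2k}\bigl((1-t)a+tb\bigr)^{m-2k},
\]
so that each $y_k$ is precisely the argument of $f$ appearing in the $k$-th summand (the even power $(a-b)^{2k}$ keeps $y_k^m\geq 0$, so the nonnegative real root is intended). Since $f(y_k)=g\bigl(y_k^{m}\bigr)$, and since $g$ is convex with $g(0)=0$ — hence superadditive — and increasing (radical convexity forces $f$, and therefore $g$, to be increasing), the whole statement will follow once I prove the single scalar inequality $\sum_{k=0}^{m/2}y_k^{m}\leq (1-t)a^{m}+tb^{m}$; indeed
\[
\sum_{k=0}^{m/2}f(y_k)=\sum_{k=0}^{m/2}g\bigl(y_k^{m}\bigr)\leq g\!\left(\sum_{k=0}^{m/2}y_k^{m}\right)\leq g\bigl((1-t)a^{m}+tb^{m}\bigr)\leq (1-t)g(a^m)+tg(b^m)=(1-t)f(a)+tf(b),
\]
using superadditivity, monotonicity, and convexity of $g$ in turn.

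The heart of the argument is therefore the purely algebraic evaluation of $\sum_k y_k^m$. Writing $A=(1-t)a+tb$ and $u=t(1-t)(a-b)^2$, the binomial theorem collapses the sum:
\[
\sum_{k=0}^{m/2}y_k^{m}=A^{m}\sum_{k=0}^{m/2}\binom{m/2}{k}\left(\frac{u}{A^{2}}\right)^{k}=A^{m}\left(1+\frac{u}{A^{2}}\right)^{m/2}=\bigl(A^{2}+u\bigr)^{m/2}.
\]
Here the evenness of $m$ is essential: it makes $m/2$ a positive integer, so the finite binomial expansion is exact. Now the quadratic computation already carried out in \eqref{1} gives $A^{2}+u=\bigl((1-t)a+tb\bigr)^2+t(1-t)(a-b)^2=(1-t)a^{2}+tb^{2}$, whence $\sum_{k=0}^{m/2}y_k^{m}=\bigl((1-t)a^{2}+tb^{2}\bigr)^{m/2}$. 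Finally, because $m/2\geq 1$ the map $x\mapsto x^{m/2}$ is convex on $[0,\infty)$, so
\[
\bigl((1-t)a^{2}+tb^{2}\bigr)^{m/2}\leq (1-t)\bigl(a^{2}\bigr)^{m/2}+t\bigl(b^{2}\bigr)^{m/2}=(1-t)a^{m}+tb^{m},
\]
which is exactly the scalar inequality required above.

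The main obstacle I anticipate is spotting the two-step collapse $\sum_k\binom{m/2}{k}u^kA^{m-2k}=(A^2+u)^{m/2}=\bigl((1-t)a^2+tb^2\bigr)^{m/2}$; once the binomial identity and its link to \eqref{1} are in hand, the remaining estimates reuse the superadditivity/convexity toolkit of Section 2. The displayed equality in the statement is merely the separation of the $k=0$ term, for which $y_0^m=A^m$ gives $y_0=(1-t)a+tb$ and hence $f(y_0)=f\bigl((1-t)a+tb\bigr)$. The ``in particular'' inequality is obtained by setting $t=\tfrac12$, so that $(1-t)f(a)+tf(b)=\tfrac{f(a)+f(b)}{2}$ and $\bigl(t(1-t)\bigr)^{k/m}(a-b)^{2k/m}=\bigl(\tfrac{|a-b|}{2}\bigr)^{2k/m}$, after the harmless reindexing $k\mapsto \tfrac{m}{2}-k$ of the summation.
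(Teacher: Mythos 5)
Your proposal is correct and is essentially the paper's own proof run in the opposite direction: the paper starts from $(1-t)f(a)+tf(b)$ and works down through convexity of $g(x)=f\left(x^{1/m}\right)$, monotonicity plus convexity of $x\mapsto x^{m/2}$, the identity $\left((1-t)a+tb\right)^{2}+t(1-t)(a-b)^{2}=(1-t)a^{2}+tb^{2}$, the binomial expansion, and superadditivity of $g$, while you package the same five ingredients as a scalar lemma $\sum_{k}y_k^{m}=\left((1-t)a^{2}+tb^{2}\right)^{m/2}\leq (1-t)a^{m}+tb^{m}$ and then apply the $g$-machinery upward. Your closing observations (the $k=0$ term giving the displayed equality, and the reindexing $k\mapsto \frac{m}{2}-k$ with $|a-b|$ in the $t=\frac12$ case) are accurate and in fact slightly more careful than the paper's statement.
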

\begin{proof}
Let $f$ be $m-$radical convex, and define $g(x)=f(\sqrt[m]{x}).$ Then, by definition, $g$ is  convex and $g(0)=0.$ Consequently,
\begin{align*}
(1-t)f(a)+tf(b)&=(1-t)g(a^m)+tg(b^m)\\
&\geq g((1-t)a^m+tb^m)\\
&\geq g\left([(1-t)a^2+tb^2]^{\frac{m}{2}}\right)\\
&=g\left([t(1-t)(a-b)^2+((1-t)a+tb)^2]^{\frac{m}{2}}\right)\\
&=g\left( \sum_{k=0}^{\frac{m}{2}}{ {m/2}\choose k}(t(1-t)(a-b)^2)^k((1-t)a+tb)^2)^{\frac{m}{2}-k}                       \right)\\
&\geq  \sum_{k=0}^{\frac{m}{2}}g\left({ {m/2}\choose k}(t(1-t)(a-b)^2)^k((1-t)a+tb)^2)^{\frac{m}{2}-k}                       \right)\\
&=\sum_{k=0}^{\frac{m}{2}}f\left({ {m/2}\choose k}^{\frac{1}{m}}(t(1-t))^{\frac{k}{m}}(a-b)^{\frac{2k}{m}}((1-t)a+tb)^{1-\frac{2k}{m}}                       \right),
\end{align*}
where we have used the facts that $g$ is convex and super additive to obtain the first and third inequalities, respectively, while the facts that the function $t\mapsto t^{\frac{m}{2}}$ is convex and $g$ is increasing were used to obtain the second inequality in the above computations. This completes the proof.
\end{proof}

For example, when $f$ is $4-$radical convex, applying Theorem \ref{thm_m-convex} implies the following.

\begin{corollary}\label{cor_4-conv}
Let $f$ be 4-radical convex, and let $a,b\geq 0$. Then,
\[\begin{aligned}
  & f\left( \left( 1-t \right)a+tb \right)+f\left( \sqrt{t\left( 1-t \right)}\left| a-b \right| \right)+f\left( \sqrt[4]{2t\left( 1-t \right)}\sqrt{\left| a-b \right|\left( \left( 1-t \right)a+tb \right)} \right) \\ 
 & \le \left( 1-t \right)f\left( a \right)+tf\left( b \right), 0\leq t\leq 1.  
\end{aligned}\]
In particular,
\[f\left( \frac{a+b}{2} \right)+f\left( \frac{\left| a-b \right|}{2} \right)+f\left( \frac{1}{{{2}^{\frac{3}{4}}}}\sqrt{\left| a-b \right|\left( a+b \right)} \right)\le \frac{f\left( a \right)+f\left( b \right)}{2}.\]
\end{corollary}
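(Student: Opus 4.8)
The plan is to specialize Theorem \ref{thm_m-convex} to the case $m=4$ and then simplify the resulting three summands explicitly. Since $m=4$ is an even integer with $m\geq 2$, the theorem applies directly to any $4$-radical convex $f$, and the sum $\sum_{k=0}^{m/2}$ runs over $k=0,1,2$, producing exactly three terms on the right-hand side.

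First I would record the three binomial coefficients $\binom{2}{0}=1$, $\binom{2}{1}=2$, $\binom{2}{2}=1$, together with the exponents $\frac{k}{m}=\frac{k}{4}$, $\frac{2k}{m}=\frac{k}{2}$, and $1-\frac{2k}{m}=1-\frac{k}{2}$. Substituting these into the general summand $f\left(\binom{2}{k}^{1/4}(t(1-t))^{k/4}|a-b|^{k/2}((1-t)a+tb)^{1-k/2}\right)$ gives: for $k=0$ the term $f((1-t)a+tb)$; for $k=2$ the term $f(\sqrt{t(1-t)}\,|a-b|)$; and for $k=1$ the term $f\left(2^{1/4}(t(1-t))^{1/4}|a-b|^{1/2}((1-t)a+tb)^{1/2}\right)$, which I would rewrite as $f\left(\sqrt[4]{2t(1-t)}\,\sqrt{|a-b|((1-t)a+tb)}\right)$ by combining $2^{1/4}(t(1-t))^{1/4}=\sqrt[4]{2t(1-t)}$ and pairing the two half-power factors into a single square root. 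Collecting these three terms reproduces the first displayed inequality.

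The only point requiring care is that the factor written $(a-b)^{2k/m}$ in Theorem \ref{thm_m-convex} must be read as $|a-b|^{2k/m}$: in the proof of that theorem the quantity entering $g$ is $(a-b)^2$ raised to the integer power $k$, so after taking the outer $m$-th root one obtains $\bigl((a-b)^{2k}\bigr)^{1/m}=|a-b|^{2k/m}$. For $m=4$ this is $|a-b|^{1/2}$ when $k=1$ and $|a-b|$ when $k=2$, which is precisely what keeps the arguments of $f$ nonnegative and well defined, so the corollary's use of $|a-b|$ is consistent with the theorem.

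For the ``In particular'' assertion I would simply set $t=\tfrac12$. Then $(1-t)a+tb=\frac{a+b}{2}$, the $k=2$ term becomes $f\!\left(\frac{|a-b|}{2}\right)$, and in the $k=1$ term $\sqrt[4]{2t(1-t)}=\sqrt[4]{1/2}=2^{-1/4}$ while $\sqrt{|a-b|\cdot\frac{a+b}{2}}=2^{-1/2}\sqrt{|a-b|(a+b)}$, so the combined factor is $2^{-3/4}\sqrt{|a-b|(a+b)}=\frac{1}{2^{3/4}}\sqrt{|a-b|(a+b)}$; the right-hand side collapses to $\frac{f(a)+f(b)}{2}$. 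Since the whole argument is a substitution into an already established inequality, there is no genuine obstacle here — the only thing to watch is the bookkeeping of the fractional exponents, in particular the simplification $2^{-1/4}\cdot 2^{-1/2}=2^{-3/4}$ in the special case.
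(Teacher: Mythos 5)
Your proposal is correct and matches the paper's approach exactly: the paper derives Corollary \ref{cor_4-conv} precisely by specializing Theorem \ref{thm_m-convex} to $m=4$ (so the sum runs over $k=0,1,2$) and then setting $t=\tfrac12$ for the second inequality, with no additional argument given. Your exponent bookkeeping, including the reading of $(a-b)^{2k/m}$ as $\left|a-b\right|^{2k/m}$ and the simplification $2^{-1/4}\cdot 2^{-1/2}=2^{-3/4}$, is accurate.
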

We conclude this article by emphasizing that although $p-$radical convex functions are convex, treating them as $p-$radical convex implies better convex inequalities. Further, the largest $p$ such that $f$ is $p-$radical convex implies the best bound in our inequalities. For example, when $f$ is $4-$radical convex, it is $2-$radical convex. However, applying Corollary \ref{cor_4-conv} implies better bounds than Corollary \ref{cor_2-conv_ref}.

{\tiny \vskip 0.3 true cm }
{\tiny (M. Sababheh) Department of Basic Sciences, Princess Sumaya University For Technology, Al Jubaiha, Amman 11941, Jordan.}

{\tiny \textit{E-mail address:} sababheh@psut.edu.jo}
{\tiny \vskip 0.3 true cm }

{\tiny (H. R. Moradi) Department of Mathematics, Payame Noor University (PNU), P.O. Box 19395-4697, Tehran, Iran.}

{\tiny \textit{E-mail address:} hrmoradi@mshdiau.ac.ir }
%-----------------------------------------------------------------------------
%-----------------------------------------------------------------------------
\end{document}